\newtheorem{theorem}{Theorem}[section]
 \newtheorem{definition}[theorem]{Definition}
 \newtheorem{proposition}[theorem]{Proposition}
 \newtheorem{example}{Example}[section]
\def\thefootnote{\fnsymbol{footnote}}
\newlength{\minitwocolumn}
\long\def\@makefntext#1{
\protect\noindent \hbox to 3.2pt {\hskip-.9pt
$^{{\eightrm\@thefnmark}}$\hfil}#1\hfill}               
\def\thefootnote{\fnsymbol{footnote}}
\def\@makefnmark{\hbox to 0pt{$^{\@thefnmark}$\hss}}    
\def\ps@myheadings{\let\@mkboth\@gobbletwo
\def\@oddhead{\hbox{}
\rightmark\hfil\eightrm\thepage}
\def\@oddfoot{}\def\@evenhead{\eightrm\thepage\hfil
\leftmark\hbox{}}\def\@evenfoot{}
\def\sectionmark##1{}\def\subsectionmark##1{}}
\font\eightrm=cmr8
\font\sc=cmr5 scaled\magstep1
\newenvironment{proof}[1][Proof]{\begin{trivlist}
\item[\hskip \labelsep {\bfseries #1}]}{\end{trivlist}}
\newcommand{\qed}{\nobreak \ifvmode \relax \else
      \ifdim\lastskip<1.5em \hskip-\lastskip
      \hskip1.5em plus0em minus0.5em \fi \nobreak
      \vrule height0.75em width0.5em depth0.25em\fi}
\newcommand\hQ{\mbox{\boldmath $Q$}}
\newcommand{\sbv}[2]{{\{{{#1},{#2}}\}}}
\newcommand{\tbv}[2]{{\{{{#1},{#2}}\}_{\Theta}}}
\newcommand{\floor}[1]{{\lfloor #1 \rfloor}}
\def\bz{\mbox{\boldmath $Z$}}
\def\bx{\mbox{$x$}}
\def\bq{\mbox{\boldmath $q$}}
\def\bp{\mbox{\boldmath $p$}}
\def\bbx{\mbox{\boldmath $x$}}
\def\bbxi{\mbox{\boldmath $\xi$}}
\def\bbq{\mbox{\boldmath $q$}}
\def\bbp{\mbox{\boldmath $p$}}
\def\bbd{\mbox{\boldmath $d$}}
\def\bu{\mbox{\boldmath $u$}}
\def\bv{\mbox{\boldmath $v$}}
\def\bw{\mbox{\boldmath $w$}}
\def\bz{\mbox{\boldmath $z$}}
\def\bomega{\mbox{\boldmath $\omega$}}
\def\bZ{\mbox{\boldmath $Z$}}
\newcommand{\calD}{{\cal D}}
\newcommand{\calL}{{\cal L}}
\newcommand{\calM}{{\cal M}}
\newcommand{\calX}{{\cal X}}
\newcommand{\Map}{{\rm Map}}
\newcommand{\ev}{{\rm ev}}
\newcommand{\AKSZ}{{\bf  AKSZ}}
\newcommand{\tAKSZ}{{\bf  tAKSZ}}
\newcommand{\QPM}{{\bf QP}}
\newcommand{\tQPM}{{\bf tQP}}
\newcommand{\beq}{\begin{equation}}
\newcommand{\eeq}{\end{equation}}
\newcommand{\bea}{\begin{eqnarray*}}
\newcommand{\eea}{\end{eqnarray*}}
\newcommand{\beqa}{\begin{eqnarray}}
\newcommand{\eeqa}{\end{eqnarray}}
\begin{document}


\baselineskip 0.7cm

\begin{titlepage}
\begin{flushright}
\end{flushright}

\vskip 1.35cm
\begin{center}
{\Large \bf
Canonical Functions, 
Differential Graded Symplectic Pairs
in Supergeometry, and 
AKSZ Sigma Models
with Boundaries
}
\vskip 1.2cm
Noriaki IKEDA$^1$%
\footnote{E-mail:\
nikeda@se.ritsumei.ac.jp,
ikeda@yukawa.kyoto-u.ac.jp}
and
Xiaomeng XU$^2$
\footnote{E-mail:\
Xiaomeng.Xu@unige.ch
}
\vskip 0.4cm
{\it
$^1$
Department of Mathematical Sciences,
Ritsumeikan University \\
Kusatsu, Shiga 525-8577, Japan \\
and \\
Maskawa Institute for Science and Culture,
Kyoto Sangyo University, \\
Kyoto 603-8555, Japan \\
$^2$
Section of Mathematics, University of Geneva \\
2-4 Rue de Li$\rm\grave{e}$vre, 
c.~p.~64, 1211-Gen$\rm\grave{e}$ve
4, Switzerland}
\vskip 0.4cm

\today

\vskip 1.5cm

\begin{abstract}
Consistent boundary conditions for 
Alexandrov-Kontsevich-Schwartz-Zaboronsky (AKSZ) sigma models and the corresponding boundary theories are 
analyzed.
As their mathematical structures,
we introduce 
a generalization of
differential graded symplectic manifolds, called twisted QP manifolds,
in terms of graded symplectic geometry,
canonical functions,
and QP pairs.
We generalize the AKSZ construction of topological sigma models
to sigma models with Wess-Zumino terms 
and show that all the twisted Poisson-like structures known in the literature 
can actually be naturally realized as boundary conditions 
for AKSZ sigma models.
\end{abstract}
\end{center}
\end{titlepage}

\renewcommand{\thefootnote}{\alph{footnote}}

\setcounter{page}{2}


\rm

\section{Introduction}
\noindent
A differential graded symplectic manifold (QP manifold) 
has been introduced from analysis of the Batalin-Vilkovisky formalism 
\cite{Schwarz:1992nx}.  
It was later used for a simple procedure for constructing
topological sigma models
by Alexandrov, Kontsevich, Schwarz, and Zaboronsky 
\cite{Alexandrov:1995kv}. 
Following this, graded symplectic geometry
has been of interest in both mathematics and physics 
due to its rich mathematical structures and links with a variety of topics; 
see 
\cite{Cattaneo:2010re}\cite{Roy01}\cite{Vaintrob}\cite{Voronov:2001qf}.


A Poisson manifold is naturally a QP manifold of degree $1$. 
A Courant algebroid, which was introduced in \cite{lwx} 
to describe the double of Lie bialgebroids, 
has a one-to-one correspondence to 
a QP manifold of degree $2$ \cite{Roy01}. 
The corresponding Alexandrov-Kontsevich-Schwartz-Zaboronsky (AKSZ) sigma models are the Poisson sigma model 
\cite{Ikeda:1993fh}\cite{Ikeda:1993aj}\cite{Schaller:1994es}
and the Courant sigma model \cite{Ikeda:2002wh}\cite{Roytenberg:2006qz}, 
respectively. 
Weaker versions of Poisson structures and variants of 
Lie algebroids and Courant algebroids are motivated by questions 
from quantum groups and field theories. 
For example, an extension of the Poisson sigma model
by a Wess-Zumino (WZ) term
naturally leads to WZ-Poisson manifolds
\cite{Klimcik:2001vg}\cite{Park2000au}, or equivalently,
the Poisson structures with a $3$-form background \cite{Severa:2001qm}. 
A similar structure was studied in the framework of 
the Courant algebroid theory.
Hansen and Strobl \cite{Hansen:2009zd} showed that 
a generalization of Courant algebroids with a $4$-form
arises naturally in the Courant sigma model with a Wess-Zumino term.

Our main result shows that all the twisted Poisson-like structures 
known in the literature can actually be naturally realized as 
(and so in a certain sense ``are'') boundary conditions for 
AKSZ sigma models. 
This result greatly clarifies the meaning of the otherwise 
rather obscure conditions defining a twisted Poisson-like structure.

To do this, we introduce a canonical function
and a QP pair (a differential graded symplectic pair), 
which are generalizations of
differential graded structures in graded symplectic geometry.
This idea is inspired by analysis of the 
consistency of boundary conditions of AKSZ sigma models 
\cite{Alexandrov:1995kv}\cite{Cattaneo:2001ys}\cite{Ikeda:2012pv}\cite{Roytenberg:2006qz}, which are
topological sigma models constructed by supergeometric methods
and a topological open
membrane \cite{Hofman:2002rv}\cite{Park2000au}. 
A similar structure appears in the Batalin-Vilkovisky (BV) 
formalism of string field theory
\cite{Hata:1993gf}.

Another motivation comes from the canonical transformation in symplectic 
supergeometry, which is also called twisting
\cite{Roytenberg:2001am}. 
It can be viewed as a higher analogue of
a Poisson function \cite{Kosmann-Schwarzbach:2007}\cite{Terashima}, 
and it defines a generalization of the Dirac structure
\cite{Courant}.
Moreover, a canonical function describes the boundary condition 
structures of the AKSZ sigma models,
which have played key roles in the derivation of the 
deformation quantization from the Poisson sigma model
\cite{Kontsevich:1997vb}\cite{Cattaneo:1999fm}. 

A canonical function leads to the concept of a QP pair, which is a certain 
tower of two (twisted) differential graded symplectic manifolds.
This 
unifies the various concepts that were separately analyzed
above, and it
includes many geometric structures 
such as the Lie ($2$-)algebras, the (twisted
or quasi) Poisson structures, the (homotopy) Lie algebroids, the
(twisted) Courant algebroids, the Nambu-Poisson structures and others.
Some of these will be used below as examples.

Analysis of a canonical transformation on a QP manifold
naturally leads to what we call a twisted QP manifold. 
It is a mathematical framework for a unified understanding 
of the so-called Wess-Zumino terms, together with twisted Poisson-like structures. 
A general method to get a twisted QP manifold is given 
by the deformation theory. 
Some examples will be presented to
illuminate this twisting process.
Moreover
a new geometric structure,
the strong Courant algebroid, is proposed.

The defining structure of a QP pair guarantees consistency 
of the bulk structure
and the boundary conditions.
In general, a quantum theory on a manifold $X$ in
$n+1$ dimensions with given boundaries 
may have the same structure as the corresponding quantum
theory on the boundary $\partial X$ in $n$ dimensions 
\cite{Bousso:2002db}.
When applying this so-called bulk-boundary correspondence
of quantum field theories to the AKSZ sigma models, 
we find that it is necessary to extend the AKSZ sigma models
to the `twisted' AKSZ sigma models.
This is a generalization of the Chern-Simons/WZW correspondence.
We propose an extension of the AKSZ construction to a twisted version
via a map from twisted QP manifolds to topological sigma models.

There are recent works on similar topics. 
Twisted structures in the setting of $L_{\infty}$-algebras are analyzed in \cite{FregierZambon1}\cite{FregierZambon2}.
There are studies that present formulations of 
AKSZ sigma models with boundaries
\cite{Cattaneo:2012qu}\cite{Cattaneo:2012zs}
\cite{Fiorenza:2011jr}\cite{Fiorenza:2013nha}
\cite{Ikeda:2013vga}. 
The concept of QP pairs is used in the current algebra theory in 
\cite{Ikeda:2013vga}. 
In \cite{xiaomeng}, the twisted QP manifold introduced in this paper is shown to be a
homotopy version of a QP manifold in the spirit of a homotopy Poisson
manifold. 
The AKSZ sigma models 
have been reformulated by using derived geometry 
\cite{Calaque}\cite{PTVV}.

This paper is organized as follows.
In Section 2, we review
a QP manifold and an AKSZ sigma model without boundaries.
In Section 3, 
we analyze the AKSZ sigma models with boundaries and introduce
canonical functions.
In Section 4, we analyze
a twisted QP manifold and present
some examples. In particular, we propose a new 
algebroid, the (twisted) strong Courant algebroid.
In Section 5, 
we consider a physical application of a twisted QP manifold 
whose defining structure guarantees consistency of 
the corresponding AKSZ sigma models with boundaries.
Bulk-boundary correspondence of sigma models
leads us to the concept of twisted AKSZ sigma models.
Section 6 summarizes our results and considers areas of future work.

\section{QP Manifolds and AKSZ Sigma  Models}
\subsection{QP Manifolds}
\noindent
A graded manifold $\calM$
is a ringed space
with a structure sheaf
of 
\bZ-graded commutative
algebra over an ordinary smooth manifold $M$.
Grading is compatible with supermanifold grading,
that is, a variable of even degree is commutative, and
one of odd degree is anticommutative. 
By definition, the structure sheaf of $\calM$ is locally isomorphic to
$C^{\infty}(U) \otimes S^{\bullet}(V)$,
where
$U$ is a local chart on $M$,
$V$ is a graded vector space, and $S^{\bullet}(V)$ is a free
graded commutative ring on $V$.
Refer to \cite{Carmeli}\cite{Varadarajan}
for the rigorous mathematical definition of objects
in supergeometry.

An N-manifold (i.e.,~a nonnegatively graded manifold) $\calM$ 
equipped with
a graded symplectic structure
$\omega$ of degree $n$ is called a {\it P-manifold} of degree $n$,
denoted by $({\calM},\omega)$.
$\omega$ is also called a $P$-structure.
A graded Poisson bracket on $C^\infty ({\calM})$ is defined as
$    \{f,g\} = (-1)^{|f|+n+1} i_{X_f} i_{X_g}\omega$,
where 
a Hamiltonian vector field $X_f$ is defined by the equation
$i_{X_f}\omega= - \delta f$
for any $f\in C^\infty({\calM})$, and
$\delta$ is a differential on $\calM$. 
A vector field $Q$ on $\calM$ is called homological if $Q^2=0$.
\begin{definition}
A \textbf{QP-manifold} is a $P$-manifold $(\calM,\omega)$ endowed with a degree 1 homological vector field $Q$ such that $\calL_Q \omega =0$ 
\cite{Schwarz:1992nx}.
\end{definition}
We call the homological vector field $Q$ the Q-structure.
We also denote a QP manifold by
the corresponding triple $(\calM,\omega,Q)$.
For any QP manifold of positive degree, there exists 
a Hamiltonian function $\Theta\in C^{\infty}(\calM)$ of $Q$ 
with respect to the graded Poisson bracket $\{-,-\}$, that is,
\beq
Q=\{\Theta,-\}.
\eeq
Then, the homological condition, $Q^2=0$, implies that
$\Theta$ is a solution of the \textit{classical master equation},
\begin{equation}
\{\Theta,\Theta\}=0.
\label{cmaseq}
\end{equation}
Such a $\Theta$ is also called a homological function.

\subsection{AKSZ Sigma Models without Boundaries}\label{sectionAKSZwboundary}
\noindent
In this subsection, we review the AKSZ construction
\cite{Alexandrov:1995kv}\cite{Cattaneo:2001ys}\cite{Roytenberg:2006qz},
which is a systematic method for constructing
a topological sigma model from a QP manifold. 
The resulting sigma model is called an AKSZ sigma model. 

Let $(\calX, D, \mu)$ be a differential graded manifold
$\calX$ 
with a $D$-invariant nondegenerate measure $\mu$,
where
$D$ is a differential on $\calX$.
Let ($\calM, \omega, Q$) be a QP-manifold,
and let
$\Map(\calX, \calM)$ be the space of smooth maps from $\calX$ to $\calM$.

Since
${\rm Diff}(\calX)\times {\rm Diff}(\calM)$
naturally acts on $\Map(\calX, \calM)$,
$D$ and $Q$ induce differentials $\hat{D}$ and $\hat{Q}$, respectively,
on $\Map(\calX, \calM)$.
Explicitly, 
$\hat{D}(z, f) = D(z) \delta f(z)$
and $\check{Q}(z, f) = Q f(z)$,
for all $z \in \calX$ and $f \in \Map(\calX, \calM)$.

An {\it evaluation map}
${\rm ev}: \calX \times \Map(\calX, \calM) \longrightarrow \calM$
is defined as
${\rm ev}:(z, f) \longmapsto f(z)$,
for any
$z \in \calX$ and $f \in \Map(\calX, \calM)$.
A {\it chain map}
$\mu_*: \Omega^{\bullet}(\calX \times \Map(\calX, \calM))
\longrightarrow \Omega^{\bullet}(\Map(\calX, \calM))$ is defined as
$$\mu_* \omega(y)(v_1, \ldots, v_k)
 = \int_{\calX} \mu(x)
 \omega(x, y) (v_1, \ldots, v_k)$$
where $v$ is a vector field on $\calX$
and
$\int_{\calX} \mu$ is the Berezin integration on $\calX$.
The composition $\mu_* \ev^*: \Omega^{\bullet}(\calM)
\longrightarrow \Omega^{\bullet}(\Map(\calX, \calM))$
is called the {\it transgression map}.

A \textbf{P-structure} $\bomega$ (a graded symplectic structure) 
on $\Map(\calX, \calM)$ is defined by
\begin{eqnarray*}
\bomega := \mu_* \ev^* \omega.
\end{eqnarray*}
%
Note that $\bomega$ is nondegenerate and closed
because the operation $\mu_* \ev^*$ preserves these properties.
The corresponding graded Poisson bracket on $\Map(\calX, \calM)$ is 
denoted by $\sbv{-}{-}$.

A \textbf{Q-structure} function 
$S$ on $\Map(\calX, \calM)$
is constructed as follows.
$S$ consists of two parts $S:=S_0+S_1$.
We take a canonical $1$-form $\vartheta$
on $\calM$ such that
$\omega= - \delta \vartheta$ and
define $S_0 := \iota_{\hat{D}} \mu_* {\rm ev}^* \vartheta$.
Moreover, we define $S_1 := \mu_* \ev^* \Theta$, 
where $\Theta$ is the homological function on $\calM$.
Then we can prove that $S$ is a homological function 
on $\Map(\calX,  \calM)$:
\begin{eqnarray}
\sbv{S}{S} =0,
\label{classicalmaster}
\end{eqnarray}
from the definitions of $S_0$ and $S_1$
and the properties of maps.
A degree $1$ homological vector field $\hQ$ is defined as $\hQ = \sbv{S}{-}$.
The classical master equation shows that
$\hQ$ is a coboundary operator, $\hQ^2=0$.
%

We thus have the following theorem.
\begin{theorem}\cite{Alexandrov:1995kv}
Let $\calX$ be a differential graded manifold with a compatible measure, 
and let $\calM$ be a QP-manifold,
then the mapping space $\Map(\calX, \calM)$
has a QP structure.
\end{theorem}
This structure is called an AKSZ sigma model.
If $\calX = T[1]X$, where $X$ is a manifold in $n+1$ dimensions,
the QP structure on $\Map(\calX, \calM)$
is of degree $-1$.
In this case, a QP structure on $\Map(T[1]X,  \calM)$
is equivalent to the Batalin-Vilkovisky formalism
of a topological sigma model.

\section{AKSZ Sigma Models with Boundaries}
\noindent
In this section, the geometric structure of
the AKSZ sigma model on a base manifold $X$ with a boundary is
analyzed and proven to be described by a quintuple mathematical datum, 
$(\calM, \omega, \Theta, \calL, \alpha)$.
When $n=1$, this corresponds to a topological open string
and produces a deformation quantization formula
\cite{Cattaneo:1999fm}.
When $n \geq 2$, such theories describe topological open membranes
\cite{Park2000au}\cite{Hofman:2002rv}.
\subsection{Consistent Boundary Conditions}
\noindent
Let us take a manifold $X$ in $n+1$ dimensions with nonempty boundaries.
Let $(\calX = T[1]X, D, \mu)$ be a differential graded manifold over $X$
with a differential $D$ and a compatible measure $\mu$,
and let
$(\calM, \omega, \Theta)$ be a QP manifold of degree $n$.
Then the AKSZ construction produces a consistent topological sigma model on the mapping space 
$\Map(T[1]X, \calM)$, as long as the boundary conditions on $\partial \calX$ are consistent with the QP structure of the whole theory.
Generally the classical master equation is not satisfied because of the boundary terms. By Stokes' theorem, a straightforward computation gives
\begin{eqnarray}
\sbv{S}{S}= \int_{\partial \calX}
\mu_{\partial \calX} \
 (i_{\partial} \times {\rm id})^* \ 
\ev^* (\vartheta + \Theta),
\label{boundaryCME}
\end{eqnarray}
where
$\mu_{\partial \calX} $ is a boundary measure
induced from $\mu$ to $\partial \calX$ by
the inclusion map
$i_{\partial}: \partial \calX \longrightarrow \calX$.
Since the classical master equation 
$\sbv{S}{S}= 0$ must be satisfied for consistency of the theory,
the right-hand side of Equation (\ref{boundaryCME})
must vanish.
From this observation, we have
\begin{theorem}\label{boundarytheta}
If $(\vartheta + \Theta) |_{{\rm Im} \, \partial \calX} =0$
on $\calM$,
then $\sbv{S}{S}=0$.
\end{theorem}
Let us consider the physical constraints.
$S$ is a classical BV action in a physical theory.
In order to derive the equation of motion
from the variational principle in mechanics,
the variation of $S$ must vanish on the boundaries.
This is satisfied
if $\vartheta = 0$ on ${\rm Im} \ \partial \calX$.
Since $\omega = - \delta \vartheta$,
this says that ${\rm Im} \ \partial \calX$ belongs to
a subspace of a Lagrangian submanifold $\calL \subset \calM$
that is the zero locus of $\vartheta=0$.
This physical requirement turns Theorem \ref{boundarytheta} 
into the following condition on $\calM$:
\begin{proposition}\label{boundarytheta2}
Let $\calL$ be a Lagrangian submanifold of $\calM$, i.e.,
$\vartheta|_{\calL} =0$.
Then $\sbv{S}{S}=0$ is satisfied if
$\Theta |_{\calL} =0$.
\end{proposition}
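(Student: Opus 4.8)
The plan is to start from Theorem \ref{boundarytheta}, which already tells us that $\sbv{S}{S}=0$ forces $(\vartheta+\Theta)|_{\calN}=0$ on $\calM$, where $\calN = {\rm Im}\,\partial\calX$. The whole content of the proposition is then to show that, once we additionally impose the boundary condition coming from the variational principle, the quantity $\vartheta$ drops out of this identity and we are left with the single requirement $\Theta|_{\calL}=0$. So the first step is to recall the boundary-variation computation performed just above the proposition: the vanishing of $\delta S$ on $\partial\calX$, namely $\int_{\partial\calX}\mu_{\partial\calX}\sum_{0\le i\le(n-1)/2}(-1)^{n+1-i}\bbp_{a(i)}\delta\bbq^{a(i)}=0$, is what singles out a Lagrangian submanifold $\calL\subset\calM$ as the locus $\vartheta|_{\calL}=0$, consistent with $\omega=-\delta\vartheta$. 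I would phrase this as: the admissible boundary conditions require ${\rm Im}\,\partial\calX\subseteq\calL$ for some Lagrangian $\calL$ with $\vartheta|_{\calL}=0$.

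Second, I would combine the two facts. Since $\calN={\rm Im}\,\partial\calX\subseteq\calL$ and $\vartheta|_{\calL}=0$, restriction to $\calN$ kills the $\vartheta$ term: $\vartheta|_{\calN}=0$. Hence the condition $(\vartheta+\Theta)|_{\calN}=0$ from Theorem \ref{boundarytheta} collapses to $\Theta|_{\calN}=0$. Conversely, if $\Theta|_{\calL}=0$, then in particular $\Theta|_{\calN}=0$ (as $\calN\subseteq\calL$), and together with $\vartheta|_{\calN}=0$ we recover $(\vartheta+\Theta)|_{\calN}=0$, which by Theorem \ref{boundarytheta} is exactly what is needed for $\sbv{S}{S}=0$. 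This gives the implication stated in the proposition: $\Theta|_{\calL}=0$ suffices for $\sbv{S}{S}=0$ under the stated boundary conditions.

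The third point to be careful about is the bookkeeping of which superfield components survive the boundary restriction: the variational boundary term only involves the $\bbp_{a(i)}\delta\bbq^{a(i)}$ pairs for $0\le i\le(n-1)/2$, and in the even-$n$ case the middle component $\bbq^{a(n/2)}$ with its metric pairing (\ref{BVbracket4}) has to be treated separately. I would check that the locus cut out by $\vartheta|_{\calL}=0$ is genuinely Lagrangian of the expected dimension in all parity cases, so that "${\rm Im}\,\partial\calX\subseteq\calL$" is the correct interpretation of the boundary condition rather than something weaker. I expect this dimension/parity check to be the only real obstacle; the logical core — restricting $(\vartheta+\Theta)|_{\calN}=0$ to the zero locus of $\vartheta$ — is immediate once that is in place. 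The argument is short, so I would present it as two displayed implications framed by the recollection of the boundary-variation identity, and close with $\qed$.
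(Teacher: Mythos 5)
Your argument is correct and is essentially the paper's own: the paper derives $\sbv{S}{S}=\int_{\partial\calX}\mu_{\partial\calX}\,(\bvartheta+\bTheta)$, notes that the variational boundary condition forces ${\rm Im}\,\partial\calX$ to lie in a Lagrangian submanifold $\calL$ with $\vartheta|_{\calL}=0$, and then concludes exactly as you do that the obstruction collapses to $\Theta|_{\calL}=0$. The only cosmetic point is that Theorem \ref{boundarytheta} is phrased as a necessary condition (``requires''), so for the sufficiency direction you should cite the underlying identity $\sbv{S}{S}=\int_{\partial\calX}\mu_{\partial\calX}\,(\bvartheta+\bTheta)$ rather than the theorem's statement, which is precisely how the paper uses it.
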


We demonstrate that this is consistent with the variational principle by 
taking the Darboux coordinates of the superfields
with respect to the P-structure $\bomega$
on $\Map(\calX, \calM)$, where the superfields are pullbacks 
of the local coordinates on $\calM$ by
$\bbx^*$, the degree zero map $\bbx:\calX \longrightarrow M$.
We denote $\bbq^{a(i)}(\sigma, \theta)
\in \Gamma (T[1]X \otimes \bbx^*(\calM_i))$
for $0 \leq i \leq \floor{n/2}$ and
$\bbp_{a(n-i)}(\sigma, \theta)
\in \Gamma (T[1]X \otimes \bbx^*(\calM_{n-i}))$
for $\floor{n/2} < i \leq n$,
where
$(\sigma, \theta)$ are local coordinates on $\calX = T[1]X$,
$\calM_{i}$ is the degree $i$ subspace of $\calM$,
$\bbx = \bbq^{a(0)}$,
and $\floor{m}$ is the floor function,
which gives the largest integer less than
or equal to $m$.
The Poisson brackets of superfields are
\begin{eqnarray}\label{BVbracket3}
\sbv{\bbq^{a(i)}(\sigma, \theta)}
{\bbp_{b(j)}(\sigma^{\prime}, \theta^{\prime})}
&=& \delta^{i}{}_{j} \delta^{a(i)}{}_{b(j)}
\delta^{n+1}(\sigma-\sigma^{\prime})
\delta^{n+1}(\theta-\theta^{\prime}),
\end{eqnarray}
and
if $n$ is even and $i = j = n/2$,
\begin{eqnarray}\label{BVbracket4}
\sbv{\bbq^{a(n/2)}(\sigma, \theta)}
{\bbq^{b(n/2)}(\sigma^{\prime}, \theta^{\prime})}
&=& k^{a(n/2)b(n/2)}
\delta^{n+1}(\sigma-\sigma^{\prime})
\delta^{n+1}(\theta-\theta^{\prime}),
\end{eqnarray}
where $k^{a(n/2)b(n/2)}$ is a metric on $\calM_{n/2}$.
The exterior derivative $d$ on $X$ induces
a superdifferential
$\bbd = \theta^{\mu} \frac{\partial}{\partial \sigma^{\mu}}$
on $\Map(\calX, \calM)$.
If we define
$\bbp_{a(n/2)} = k_{a(n/2)b(n/2)} \bbq^{a(n/2)}$,
then $S$ takes the same form for odd or even $n$,
\begin{eqnarray}
S &=& S_0 + S_1
= \int_{\calX}
\mu
\
\left(
\sum_{0 \leq i \leq \floor{n/2}}
(-1)^{n+1-i} \bbp_{a(i)} \bbd \bbq^{a(i)}
\right)
+ \int_{\calX}
\mu
\ \ev^* \Theta(\bbq, \bbp).
\label{Szero}
\end{eqnarray}

In order to derive the equations of motion
of the superfields,
the variation of $S=S_0+S_1$ must vanish on the boundaries.
The variation is 
$\delta S = 
\int_{\partial \calX}
\mu_{\partial \calX}
\
\sum_{0 \leq i \leq (n-1)/2}
(-1)^{n+1-i} \bbp_{a(i)} \delta \bbq^{a(i)}
$ 
up to the equations of motion.
Thus, we should impose the condition
$\bbp_{a(i)} \delta \bbq^{a(i)} =0$
on the boundary $\partial \calX$. 

\subsection{Canonical Transformations and Canonical Functions}
\noindent
A physical theory can have 
boundary terms if the base manifold $X$ has a boundary. 
We introduce boundary terms by a canonical transformation.

Let $(\calM, \omega, Q)$ be a QP manifold of degree $n$,
and suppose that $Q$ is generated by a
Hamiltonian function $\Theta \in C^{\infty}(\calM)$
of degree $n+1$.
Let $\alpha \in C^{\infty}(\calM)$. We define 
an exponential operation $e^{\delta_\alpha}$ by
$$
e^{\delta_\alpha} f
= f + \sbv{f}{\alpha}
+ \frac{1}{2} \sbv{\sbv{f}{\alpha}}{\alpha}
+ \cdots,
$$
for any $f\in C^{\infty}(\cal M)$.

\begin{definition}
For any function $\alpha$ of degree $n$, 
$e^{\delta_\alpha}$ is called a \textbf{canonical transformation}. 
\end{definition}
Note that $e^{\delta_\alpha}$ was called \textbf{twisting} 
in \cite{Roytenberg:2001am}.
In addition, for any $\alpha$ of degree $n$, 
$\sbv{e^{\delta_\alpha} f}{e^{\delta_\alpha} g} 
= e^{\delta_\alpha} \sbv{f}{g}$, where $f, g \in C^{\infty}(\calM)$.
Now we introduce the canonical function.

\begin{definition}\label{CanonicalFunction}
Let $(\calM, \omega, Q=\sbv{\Theta}{-})$
be a QP manifold of degree $n$.
A function $\alpha$ of degree $n$
is called a \textbf{canonical function} with respect to $\calL$ if
$e^{\delta_\alpha} \Theta |_{\calL} =0$, where $\calL$ is 
a Lagrangian submanifold
of $(\calM,\omega)$ and $|_{\calL}$ is the restriction on $\calL$.
\end{definition}
Obviously if $\alpha$ is a canonical function on a QP manifold
$(\calM,\omega,\Theta)$, $e^{\delta_\alpha}$ is a canonical
transformation.

We make a canonical transformation $e^{\delta_\alpha}$
by a function $\alpha$ of degree $n$.
This changes a target QP manifold to
$(\calM, \omega, \Theta_{\alpha})$, where
the Q-structure is $\Theta_{\alpha} = e^{\delta_\alpha} \Theta$.
Since the P-structure does not change, 
a new Q-structure $S^{\prime}$ on $\Map(T[1]X,\cal M)$
in the AKSZ sigma model becomes
\begin{eqnarray}
S^{\prime} &=& S_0 + S_1^{\prime}
\nonumber \\
&=& \iota_{\hat{D}} \mu_* {\rm ev}^* \vartheta
+ \mu_* \ev^*
e^{\delta_\alpha} \Theta.
\label{akszwithboundary}
\end{eqnarray}
If $\partial X = \emptyset$, 
$S^{\prime}$ also satisfies
the classical master equation
because of
$\sbv{e^{\delta_\alpha} \Theta}{e^{\delta_\alpha} \Theta}
= e^{\delta_\alpha} \sbv{\Theta}{\Theta} =0$.
If $\partial X \neq \emptyset$,
we should modify the boundary conditions
so that $S^\prime$ satisfies the classical master equation.
Applying Proposition \ref{boundarytheta2} to
Equation (\ref{akszwithboundary}), we have
\begin{proposition}\label{boundaryQstr}\cite{Hofman:2002rv}
We assume $\partial \calX \neq \emptyset$.
Let $(\calM, \omega, \Theta)$ be a QP manifold of degree $n$,
and let $\alpha \in C^{\infty}(\calM)$ of degree $n$
be a canonical transformation.
The classical master equation $\sbv{S}{S}=0$
is satisfied in an AKSZ sigma model (\ref{akszwithboundary}) if
$
e^{\delta_\alpha} \Theta |_{\calL} =0$,
that is, $\alpha$ is a canonical function 
with respect to a Lagrangian submanifold $\calL$ of $\calM$.
\end{proposition}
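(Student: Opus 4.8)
The plan is to combine Theorem~\ref{boundarytheta} (more precisely its physical refinement, Proposition~\ref{boundarytheta2}) with the elementary algebraic identity governing how the Q-structure function transforms under a canonical transformation. First I would recall the starting point: for the AKSZ sigma model (\ref{akszwithboundary}) on a base $X$ with $\partial X \neq \emptyset$, the failure of the classical master equation is entirely a boundary term, $\sbv{S}{S} = \int_{\partial\calX} \mu_{\partial\calX}\,(\bvartheta + \bTheta^{\prime})$, where now $\bTheta^{\prime}$ is the integrand corresponding to $\Theta^{\prime} = e^{\delta_\alpha}\Theta$. This is exactly the content of Theorem~\ref{boundarytheta} applied with $\Theta$ replaced by $\Theta^{\prime}$, because the canonical transformation $e^{\delta_\alpha}$ preserves the P-structure and in particular does not alter $\bvartheta$ (the kinetic term $S_0$ is untouched, as emphasized just before (\ref{akszwithboundary})). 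So $\sbv{S}{S}=0$ reduces to the geometric requirement $(\vartheta + \Theta^{\prime})|_{\calN} = 0$ on $\calM$, where $\calN = {\rm Im}\,\partial\calX$.

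Next I would invoke the physical boundary conditions already derived in subsection~4.1: the variational principle forces ${\rm Im}\,\partial\calX$ to lie inside the Lagrangian submanifold $\calL \subset \calM$ cut out by $\vartheta = 0$. Hence on $\calN \subseteq \calL$ we have $\vartheta|_{\calN} = 0$ automatically, and the consistency condition collapses to $\Theta^{\prime}|_{\calL} = 0$, i.e. $e^{\delta_\alpha}\Theta|_{\calL} = 0$. This is precisely the defining equation of a canonical function in Definition~\ref{CanonicalFunction}. In other words, the argument is the same as the one leading from Theorem~\ref{boundarytheta} to Proposition~\ref{boundarytheta2}, but carried out for the canonically transformed Q-structure $\Theta^{\prime}$ rather than $\Theta$ itself; one simply reads Proposition~\ref{boundarytheta2} with $\Theta \rightsquigarrow e^{\delta_\alpha}\Theta$.

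I do not anticipate a serious obstacle here — the proof is essentially a bookkeeping argument assembling pieces already in place. The one point that deserves care is the claim that performing the canonical transformation $e^{\delta_\alpha}$ on the target does not change $\bvartheta$ while changing $\bTheta$ to the integrand of $e^{\delta_\alpha}\Theta$; this is what makes the boundary term $\bvartheta + \bTheta^{\prime}$ rather than $(e^{\delta_\alpha}\vartheta) + (e^{\delta_\alpha}\Theta)$. The reason is that $\alpha$ has degree $n$, so $e^{\delta_\alpha}$ acting on the superfield realization corresponds to a field redefinition that is canonical with respect to the BV antibracket $\bomega$ on $\Map(\calX,\calM)$ and therefore leaves the first-order kinetic piece $S_0$ invariant while transforming only $S_1$; this is the standard fact about AKSZ canonical transformations (cf.\ \cite{Roytenberg:2001am}\cite{Hofman:2002rv}) and is recorded in (\ref{akszwithboundary}). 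Once that is granted, the rest is immediate: apply Theorem~\ref{boundarytheta} to $\Theta' = e^{\delta_\alpha}\Theta$, use $\calN \subseteq \calL$ and $\vartheta|_{\calL}=0$ to drop the $\vartheta$ contribution, and conclude that $e^{\delta_\alpha}\Theta|_{\calL}=0$ suffices for $\sbv{S}{S}=0$, which by Definition~\ref{CanonicalFunction} is exactly the statement that $\alpha$ is a canonical function.
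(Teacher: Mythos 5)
Your proposal is correct and follows essentially the same route as the paper: the paper obtains Proposition \ref{boundaryQstr} exactly by applying Theorem \ref{boundarytheta} (through its refinement, Proposition \ref{boundarytheta2}) to the action (\ref{akszwithboundary}), in which the canonical transformation leaves $S_0$ (hence $\bvartheta$) untouched and replaces the interaction integrand by that of $e^{\delta_\alpha}\Theta$, with ${\rm Im}\,\partial\calX\subseteq\calL$ killing the $\vartheta$ contribution. Your added remark that $\sbv{e^{\delta_\alpha}\Theta}{e^{\delta_\alpha}\Theta}=e^{\delta_\alpha}\sbv{\Theta}{\Theta}=0$ justifies reusing the boundary formula for the transformed Q-structure, which is precisely the point the paper records just above the proposition.
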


From Proposition \ref{boundaryQstr},
the mathematical structure of an
AKSZ sigma model with a boundary is 
a quintuple $(\calM, \omega, \Theta, \calL, \alpha)$.

The first three elements of the 
datum form a QP manifold $(\calM, \omega, \Theta)$.
The full quintuple is a QP manifold with 
a trivialization of $\omega$ and $\Theta$.
The trivialization of $\omega$, $\omega|_{\calL}=0$ 
determines a Lagrangian submanifold $\calL$ of $\calM$
(we take the maximal $\calL$).
Finally, we require a trivialization of $\Theta$, but 
this can be refined so that $\Theta$ vanishes on $\calL$ 
up to a canonical transformation $e^{\delta_{\alpha}}$, i.e., 
$e^{\delta_\alpha} \Theta |_{\calL} =0$
such that $|\alpha| = |\omega|$.

\subsection{From Canonical Functions to Boundary Terms}
\noindent
We see that $\alpha$ corresponds to physical boundary terms
in a special case.

In Equation (\ref{akszwithboundary}),
the geometric structure is equivalent 
even if we make the inverse canonical transformation 
on $\Map(\calX, \calM)$ for the P-structure in addition to that for the Q-structure.
We then have an equivalent expression for the homological function:
\begin{eqnarray}
S^{\prime\prime} &=& e^{- \delta_{\alpha}} S^{\prime}
\nonumber \\
&=& \iota_{\hat{D}} \mu_* {\rm ev}^*
e^{- \delta_{\alpha}} \vartheta
+ \mu_* \ev^*
e^{- \delta_{\alpha}} e^{\delta_\alpha} \Theta.
\nonumber \\
&=& \iota_{\hat{D}} \mu_* {\rm ev}^*
e^{- \delta_{\alpha}} \vartheta
+ \mu_* \ev^*
\Theta.
\label{ChangeS}
\end{eqnarray}
The AKSZ sigma model is $S^{\prime\prime}$, 
and its geometric structure
defines the equivalent QP structure as
the original $S^{\prime}$ on $\Map(\calX, \calM)$.

Let us consider the special case such that $\alpha$ satisfies
$\sbv{\alpha}{\alpha}=0$. 
Then, since $e^{- \delta_{\alpha}} \vartheta
= \vartheta - \sbv{\vartheta}{\alpha}$,
${\alpha}$ generates a boundary term
$S_{\partial \calX} =
\mu_{{\partial \calX}*} (i_{\partial} \times {\rm id})^* \ev^* {\alpha}$ 
by the following computation:
\begin{eqnarray}
S^{\prime\prime}
&=&
S_0
- \mu_* \, \ev^*
\sbv{\vartheta}{\alpha}
+ \mu_* \, \ev^* \Theta
\nonumber \\
&=&
S_0 
-
L_{\hat{D}} \, \mu_* \ev^* {\alpha}
+ 
\mu_* \, \ev^* \Theta
\nonumber \\
&=&
\iota_{\hat{D}} \mu_* {\rm ev}^* \vartheta
+ \mu_* \ev^* \Theta
-
\mu_{\partial \calX *} \, (i_{\partial} \times {\rm id})^*
\ev^* {\alpha}.
\label{AKSZbulk}
\end{eqnarray}
Here we have derived this equation from the formula 
$\sbv{S_0}{-}= L_{\hat{D}}(-)$ and 
the Stokes' theorem,
$L_{\hat{D}} \mu_* \ev^* (-) =
\mu_{\partial \calX *} \, 
(i_{\partial}^* \times {\rm id})^* \ \ev^* (-)$,
where $L_{\hat{D}}$ is a Lie derivative.
%
Thus a canonical function leads to
a boundary source term, 
$S_{\partial \calX} =
\mu_{\partial \calX *} \, (i_{\partial} \times {\rm id})^*
\ev^* {\alpha}$.
Physically, this sigma model describes a
topological open membrane
with boundary charges
\cite{Hofman:2002rv}\cite{Park2000au}.
Although general canonical functions can generate
more complicated boundary terms which are not
integrations of local Lagrangians,
Proposition \ref{boundaryQstr} says that
these also provide physically consistent
boundary deformations of the AKSZ sigma models.


In the next section, we will focus on 
the underlying mathematical structure,
a quintuple 
$(\calM, \omega, \Theta, \calL, \alpha)$, and
a generalization of QP manifolds,
twisted QP manifolds, will be introduced and analyzed. 
Moreover, a new geometric structure, a QP pair, will be proposed.

\section{Twisted Higher-Degree Poisson Manifolds}
\subsection{Twisted QP manifolds}
\label{twistedQPmfd}
\noindent
Let $\calM$ be a QP manifold of degree $n+1$,
and let $\calL$ be a Lagrangian submanifold.
In this section, we concentrate on the structure on $\calL$. 
By the Lagrangian neighborhood theorem in symplectic geometry, 
the boundary condition in Proposition \ref{boundaryQstr},
$(\calM, \omega, \Theta, \calL, \alpha)$, is locally  realized
by $(T^*[n+1]\calL, \omega_{can}, \Theta, \calL, \alpha)$.
Here $\omega_{can}$ denotes the canonical symplectic form 
on the shifted cotangent bundle $T^*[n+1]\calL$.
Therefore, in the following, we will discuss a QP manifold of degree $n+1$,
$(T^*[n+1]\calL, \omega, \Theta)$,
where $\calL$ is an N-manifold of degree $n$. 
We may assume $\omega = \omega_{can}$, but this is not necessary.

Let $\pi: T^*[n+1]\calL \rightarrow \calL$ be the natural projection.
The derived bracket \cite{Kosmann-Schwarzbach:2003en} $\tbv{-}{-}$ 
on $C^\infty(\calL)$:
\begin{eqnarray}
\tbv{f}{g} = \sbv{\sbv{\pi^* f}{\Theta}}{\pi^* g}|_{\calL},
\label{derivedQPbracket}
\end{eqnarray}
is a graded Poisson bracket for any $f$, $g\in C^{\infty}(\calL)$.
Because 
$\sbv{\pi^* f}{\pi^* g}=0$ 
and $\sbv{\Theta}{\Theta}=0$,
the bracket $\tbv{-}{-}$ is graded symmetric 
and satisfies the Leibniz rule and the Jacobi identity.
Throughout this section, we assume that the derived bracket 
is nondegenerate. 
Then the bracket $\tbv{-}{-}$ defines a
graded symplectic structure $\omega_{\Theta}$ on $\calL$. 

Let $\alpha_{\calL}$ be a function on $\calL$ such that 
$\alpha:= \pi^* \alpha_{\calL}$ is a canonical function 
with respect to the Lagrangian submanifold $\calL$ of 
$(T^*[n+1]\calL,\{-,-\},\Theta)$. 
By the definition of canonical functions, we have 
\begin{eqnarray}
\frac{1}{2}\{\alpha_{\calL},\alpha_{\calL}\}_{\Theta}=-\Theta|_{\calL}
-\{\Theta,\alpha\}|_{\calL}
-\frac{1}{3!}\{\{\{\Theta,\alpha\},\alpha\},\alpha\}|_{\calL}
+ \cdots.
\label{Twistedmasterequation}
\end{eqnarray}
If the right-hand side of this equation is zero,
$\tbv{\alpha_{\calL}}{\alpha_{\calL}}=0$. Then
$(\calL,\tbv{-}{-},\alpha_{\calL})$ is a QP manifold of degree $n$. 
However, in general, the master equation is violated and 
controlled by the right-hand side of Equation (\ref{Twistedmasterequation}), which
involves the homological function and the canonical function. 
This observation leads to the following definition.

\begin{definition}\label{defoftwistedQPmfd}
Let $(\calL,\{-,-\}_{\calL})$ be a $P$-manifold of degree $n$, 
and let $\alpha_{\calL}$ be a degree $n+1$ function on it. 
Then $(\calL,\{-,-\}_{\calL},\alpha_{\calL})$ is called 
a \textbf{twisted $QP$-manifold} 
if there exists a $QP$-manifold 
$(T^*[n+1]\calL,\{-,-\},\Theta)$ such that
\begin{enumerate}
\item $\{f,g\}_{\calL}=\{\{\pi^* f,\Theta\}, \pi^* g \}|_\calL$,
\item $e^{\delta_{\alpha}}\Theta|_\calL=0$,
\end{enumerate}
where $\pi: T^*[n+1]\calL \rightarrow \calL$ is the natural projection 
and $f, g \in C^{\infty}(\calL)$.
\end{definition}
In this case, we call $(T^*[n+1]\calL, \omega, \Theta)$ 
the \textbf{big QP manifold}, 
$(\calL, \sbv{-}{-}_s= \tbv{-}{-}, \alpha_{\calL})$ 
the \textbf{small (twisted) QP manifold}, 
and $(T^*[n+1]\calL, \omega, \Theta, \calL, 
\sbv{-}{-}_s, \alpha)$ 
a \textbf{QP pair}, where $\alpha = \pi^* \alpha_{\calL}$. We also say that 
$(T^*[n+1]\calL, \omega, \Theta)$ is a QP realization of $(\calL,\{-,-\}_{\calL},\alpha_{\calL})$.

The following proposition 
is an immediate consequence of the definition.
\begin{proposition}
If $(\calL,\{-,-\}_{\calL}, \alpha_{\calL})$ is a twisted QP manifold with a
QP realization $(T^*[n+1]\calL,\omega,\Theta)$, 
then
\begin{eqnarray}
\frac{1}{2}\{\alpha_{\calL},\alpha_{\calL}\}_{\calL} 
= \frac{1}{2}\{\alpha_{\calL},\alpha_{\calL}\}_{\Theta}
= - \Theta|_{\calL}
-\{\Theta,\alpha\}|_{\calL}
-\frac{1}{3!}\{\{\{\Theta,\alpha\},\alpha\},\alpha\}|_{\calL}
-\cdots.
\end{eqnarray}
\end{proposition}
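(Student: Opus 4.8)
The plan is to rearrange the series defining the canonical function equation and isolate the term quadratic in $\alpha$. By hypothesis $\alpha$ is a canonical function on the QP realization $(T^*[n+1]\calM,\omega,\Theta)$ with respect to the Lagrangian submanifold $\calM\subset T^*[n+1]\calM$ (the base/zero section), so Definition \ref{CanonicalFunction} gives
\begin{eqnarray*}
0 = e^{\delta_\alpha}\Theta|_{\calM}
= \Theta|_{\calM} + \{\Theta,\alpha\}|_{\calM}
+ \tfrac{1}{2}\{\{\Theta,\alpha\},\alpha\}|_{\calM}
+ \tfrac{1}{3!}\{\{\{\Theta,\alpha\},\alpha\},\alpha\}|_{\calM} + \cdots .
\end{eqnarray*}
Transposing the $k=2$ term to the left-hand side yields precisely the asserted identity, provided one verifies that $\{\{\Theta,\alpha\},\alpha\}|_{\calM}=\{\alpha,\alpha\}_{\Theta}$.

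To see this, I would first unfold the definition of the derived bracket on $C^\infty(\calM)$, namely $\{-,-\}_{\Theta}=\{\{-,\Theta\},-\}|_{\calM}$, so that $\{\alpha,\alpha\}_{\Theta}=\{\{\alpha,\Theta\},\alpha\}|_{\calM}$. Since $\alpha$ has degree $n+1$, which is exactly the weight that makes $\{\alpha,-\}$ degree-preserving for the Poisson bracket of degree $-(n+1)$ attached to $\omega$, the graded (anti)symmetry of that bracket identifies $\{\alpha,\Theta\}$ with $\{\Theta,\alpha\}$ up to the relevant sign, and hence $\{\{\Theta,\alpha\},\alpha\}|_{\calM}$ with $\{\alpha,\alpha\}_{\Theta}$. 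This is the only computational point, and it is purely a matter of sign bookkeeping; there is no structural difficulty.

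Finally I would note that each term $\tfrac{1}{k!}\{\cdots\{\Theta,\alpha\}\cdots,\alpha\}|_{\calM}$ in the expansion is well defined for exactly the reasons the canonical function equation itself makes sense: $\omega$ and $\Theta$ live globally on $T^*[n+1]\calM$, $\alpha$ (regarded as a function on $\calM$, constant along the fibre) lifts there, and $\calM$ is the Lagrangian on which the restriction is taken. So the proposition follows immediately by regrouping the exponential series, and the only thing that needs care — and the closest thing to an "obstacle" — is fixing the sign convention so that the weight-two bracket matches $\tfrac12\{\alpha,\alpha\}_{\Theta}$ rather than its negative.
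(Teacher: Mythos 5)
Your proposal is correct and takes essentially the same route as the paper, which treats the statement as an immediate consequence of expanding the canonical function equation $e^{\delta_\alpha}\Theta|_{\calM}=0$ and identifying the quadratic term with $\tfrac{1}{2}\{\alpha,\alpha\}_{\Theta}$ via the derived bracket $\{\{-,\Theta\},-\}|_{\calM}$, exactly your rearrangement. The sign bookkeeping you defer (relating $\{\Theta,\alpha\}$ to $\{\alpha,\Theta\}$ using that $(|\Theta|-n-1)(|\alpha|-n-1)=0$) is precisely the point the paper also leaves implicit, so there is no substantive difference between the two arguments.
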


In Definition \ref{defoftwistedQPmfd},
it is not clear how to decide 
whether a given $(\calL,\{-,-\}_{\calL},\alpha_{\calL})$ is 
a twisted QP manifold, 
since a twisted QP manifold needs extra data
$(T^*[n+1]\calL, \omega, \Theta)$.
Thus a QP pair is a good starting point for the analysis 
of twisted structures.
We consider this problem in Section \ref{TwistedQPAKSZ}
in terms of the deformation theory of QP manifolds and the
bulk-boundary correspondence of AKSZ sigma models.

\subsection{Examples}
\noindent
Twisted QP manifolds contain many kinds of 
twisted higher Poisson structures, for example, 
the twisted Courant algebroids, 
Nambu-Poisson structures,
and higher twisted algebroids. In this section, we show how known and new structures 
can be realized as twisted QP manifolds.

First, we give an illustrative example of
a canonical function and a twisted QP manifold that is not a QP manifold.

%
\begin{example}
\label{twistedPoissonexample}
\cite{Terashima}
Let $\calL=T^*[1]M \times \mathfrak{g}[1]$,
where $M$ is a manifold and $\mathfrak{g}$ is a quadratic
Lie algebra. To define a twisted Q-structure on $\calL$, 
we define a homological function on the shifted cotangent bundle 
$T^*[2]\calL$ with a canonical symplectic structure $\omega$.
We take local coordinates on $M$,
the fiber of $T^*[1]M$,
and $\mathfrak{g}[1]$,
$(x^i, p_i, u^a)$, respectively.
The conjugate coordinates of the fiber of $T^*[2](T^*[1]M \times \mathfrak{g}[1])$
are $(\xi_i, q^i, v_a)$.
The following function of degree $3$ is a homological function on $T^*[2]\calL$ if $\sbv{\Theta_M}{\Theta_H}=0$ and
$\sbv{\Theta_C}{\Theta_R}=0$:
\begin{eqnarray*}
\Theta &=&
\Theta_M
+
\Theta_C
+
\Theta_R
+
\Theta_H
\nonumber \\
&=&
\xi_{i} q^{i}
+ \frac{1}{2}
C_{ab}{}^{c} u^{a} u^{b} v_c
+ \frac{1}{3!}
R^{abc}v_{a} v_{b} v_{c}
+ \frac{1}{3!}
H_{ijk}(x) q^{i} q^{j} q^{k},
\end{eqnarray*}
where $C_{ab}{}^c$ 
is a Lie algebra structure constant 
and $R^{abc}$ is a constant.
Let $H$ be a $3$-form on $M$ defined
by $H=\frac{1}{3!} H_{ijk}(x) dx^{i} \wedge dx^{j} \wedge dx^{k}$,
and let $R\in\wedge^3\mathfrak{g}^*$ be defined by 
$R=\frac{1}{3!}R^{abc}v_{a} v_{b} v_{c}$.
Note that $\mathfrak{g}$, $R$ is also seen as a constant section of
$\wedge^3(M\times\mathfrak{g})$.
$\Theta$ is a homological function 
if and only if
$H$ is a closed $3$-form on $M$ and 
$R$ is a Lie algebra $3$-cocycle. 

Let $\alpha = \pi + \rho
= \frac{1}{2} \pi^{ij}(x) p_i p_j
+ \rho^{j}{}_a(x) u^a p_j
$
be a canonical function with respect to the Lagrangian submanifold
$\calL = T^*[1]M \times \mathfrak{g}[1]$, which is 
locally expressed by 
$\{\xi_i=q^i=v_a=0\}$.
From the canonical function equation,
a canonical function $\alpha$ satisfies
\[\frac{1}{2}\{\alpha_{\calL},\alpha_{\calL}\}_{\Theta}
=
-\frac{1}{3!}\{\{\{\Theta,\alpha\},\alpha\},\alpha\}|_{\calL}
\neq 0.\] 
Generally, $\alpha$ gives a Lie algebroid structure 
on $T^*M \times \mathfrak{g}$, as discussed in \cite{Lu}.

If $\Theta_C = \Theta_R = \Theta_H=0$ and $\rho=0$, the canonical
function equation
$e^{\delta_\alpha} \Theta |_{\calL} =0$ is
equivalent to 
$[\pi_{\calL}, \pi_{\calL}]_S = - \sbv{\sbv{\Theta}{\pi}}{\pi}|_{\calL} =0$, 
which defines
a nontwisted QP manifold and 
is nothing but a Poisson
structure on $M$. Here $[-, -]_S = \tbv{-}{-}
$
is identical to the Schouten bracket on $\Gamma(\wedge^{\bullet}TM)$.

If $\Theta_H=0$ and $R$ is the Cartan 3-tensor, 
then the canonical function equation
defines a quasi-Poisson structure,
$[\pi_{\calL}, \pi_{\calL}]_S = \wedge^3 \rho^{\#} R$
\cite{AKM}.

If $\rho=0$, the canonical function equation defines a
twisted-Poisson structure, $[\pi_{\calL}, \pi_{\calL}]_S = \wedge^3 \pi^{\#} H$
\cite{Klimcik:2001vg}\cite{Park2000au}\cite{Severa:2001qm}. 
The derived-derived bracket $\{\{-,\alpha_{\calL}\}_\Theta,-\}_\Theta$
gives a twisted Poisson bracket on $C^\infty(M)$.

\end{example}

\begin{example}[$H_4$-Twisted Courant algebroid]\label{twistedCourantexample}
We consider an N-manifold \\
$T^*[3]\calL = T^*[3]T^*[2]E[1]$, 
where $E \longrightarrow M$ is a vector bundle
on a manifold $M$.
We take local coordinates $(x^i, u^a, p_i)$ of degree $(0, 1, 2)$
on $T^*[2]E[1]$ and conjugate local coordinates
$(\xi_i, v_a, q^i)$ of degree $(3, 2, 1)$ on the fiber.

A graded symplectic structure is given by
$\omega = \delta x^i \wedge \delta \xi_i
- \delta u^a \wedge \delta v_a
+ \delta p_i \wedge \delta q^i$
and takes a Lagrangian submanifold $\calL = T^*[2]E[1]$, 
which is locally expressed by $\{ q^i = \xi_i = v_a  = 0 \}$.

Let us consider the following Q-structure
satisfying $\Theta |_{\calL}=0$:
\bea
\Theta &=& \xi_i q^i
+ \frac{1}{2}k^{ab} v_a v_b+\frac{1}{4!}
H_{ijkl}(x) q^{i} q^{j} q^{k} q^{l},
\eea
where $k^{ab}$ is a fiber metric on $E$.
$\Theta$ satisfies
$\sbv{\Theta}{\Theta}=0$ if and only if $dH=0$, where
$H = \frac{1}{4!} H_{ijkl}(x) dx^{i} \wedge dx^{j} \wedge dx^{k}
\wedge dx^{l}$
is a $4$-form on $M$.
This defines a Lie algebroid up to homotopy 
on $T^*E$ \cite{Ikeda:2010vz}\cite{Grutzmann}.

We take the following canonical function of degree $3$,
\bea
\alpha =
\rho^{i}{}_a(x) p_{i} u^{a}
+\frac{1}{3!}h_{abc}(x) u^a u^b u^c,
\eea
with respect to the Lagrangian submanifold
$\calL = T^*[2]E[1] 
$.
Straightforward calculations show that the canonical function 
equation $e^{\delta_{\alpha}}\Theta|_{\calL}=0$ produces
the following identities:
\begin{eqnarray}
&&
k^{ab}\rho^i{}_a\rho^j{}_b=0,\label{Courant1}
\\
&&
\frac{\partial \rho^{i}{}_a}{\partial x^j} \rho^{j}{}_b
- \frac{\partial \rho^{i}{}_b}{\partial x^j} \rho^{j}{}_a
+ k^{cd}\rho^i{}_c h_{dab}
=0,
\label{Courant2}
\\
&&
\frac{\partial h_{abc}}{\partial x^i}  \rho^{i}{}_d
+ k^{ef}h_{abe} h_{fcd}\nonumber
+ H_{ijkl}\rho^i{}_a\rho^j{}_b\rho^k{}_c\rho^l{}_d
\\ &&
\qquad
+ (abcd \ \mbox{completely skewsymmetric})=0.\label{Courant3}
\end{eqnarray}
If $H=0$, 
$e^{\delta_{\alpha}}\Theta|_{\calL}=0$ is equivalent to
$\{\alpha_{\calL},\alpha_{\calL}\}_{\Theta}=0$.
Then $(T^*[2]E[1],\{-,-\}_{\Theta},\alpha_{\calL})$
is a QP manifold of degree 2.
Equations (\ref{Courant1}), (\ref{Courant2}),
and (\ref{Courant3})
are satisfied if and only if $E$ is the Courant algebroid
\cite{lwx} \cite{Roy01}.
The Dorfman bracket of the Courant algebroid 
is given by the derived-derived bracket on
$\Gamma (E)$ by
\begin{eqnarray}
[e_1,e_2]_D= - \{\{e_1,\alpha_{\calL}\}_{\Theta},e_2\}_{\Theta},
\ \forall e_1,e_2\in\Gamma(E).
\label{CourantDorfmanbracket}
\end{eqnarray}

Let us consider the general case where $H \neq 0$.
Then the canonical function
$\alpha$ satisfies
\begin{eqnarray}
\frac{1}{2}\{\alpha_{\calL},\alpha_{\calL}\}_{\Theta}
=-\frac{1}{4!}\{\{\{\{H,\alpha\},\alpha\},\alpha\},\alpha\}
|_{T^*[2]E[1]}\neq 0,
\end{eqnarray}
from the canonical function equation.
This is a twisted QP
manifold $T^*[2]E[1]$ of degree 2.
The structure given by Equations
(\ref{Courant1}), (\ref{Courant2}), and (\ref{Courant3}) is the
$H_4$-twisted Courant algebroid \cite{Hansen:2009zd},
in which the Leibniz identity of the Dorfman bracket 
(\ref{CourantDorfmanbracket}) on $\Gamma(E)$
is broken by the $4$-form $H$
:
\begin{eqnarray}
[e_1, [e_2, e_3]_D]_D -
[[e_1, e_2]_D, e_3]_D - [e_2, [e_1, e_3]_D]_D = \wedge^4 \rho^{\#} H.
\end{eqnarray}
Note that on $M$, $H$ is a $4$-form and $h$ is a $3$-form.
Such a $4$-form twisted Courant algebroid appears naturally 
in the theory of three-dimensional topological sigma models with 
Wess-Zumino terms, 
the cotangent extension of quadratic transitive 
Lie algebroids, and the reduction of exact Courant algebroids. 
It also naturally arises from a coisotropic Cartan geometry; see \cite{Xu}.
\end{example}

\begin{example}[Twisted higher Dorfman brackets
of degree $n$]
\label{ngeneral}
Let $E \longrightarrow M$ be a vector bundle
over a manifold $M$. We take an N-manifold
$(T^*[n]T^*[n-1]E[1],\omega,\Theta)$ with a canonical function $\alpha$.  
Local coordinates on $\calL = T^*[n-1]E[1]$ are chosen as
$(x^i, u^a, p_i, w_a)$ of degree $(0, 1, n-1, n-2)$,
and those of the fiber are chosen as
$(\xi_i, v_a, q^i, z^a)$ of degree $(n, n-1, 1, 2)$.

The canonical graded symplectic structure of degree $n$ is given by
$$\omega = \delta x^i \wedge \delta \xi_i
+ (-1)^n \delta u^a \wedge \delta v_a
+ \delta p_i \wedge \delta q^i
+ (-1)^n \delta w_a \wedge \delta z^a
.$$

We define a Lagrangian submanifold by $\calL = T^*[n-1]E[1]
= \{\xi_i = v_a = q^i = z^a=0 \}$ and
a Q-structure 
satisfying $\Theta |_{\calL}=0$ as
\bea
\Theta := \xi_i q^i
+ v_a z^a
+ \frac{1}{2} C^a{}_{ij}(x) v_a q^i q^j
+ \frac{1}{(n+1)!}
H{}_{i_0 \cdots i_{n}}(x)
q^{i_0} \cdots q^{i_{n}}.
\eea
Then $\sbv{\Theta}{\Theta}=0$ produces
$d H=0$
and $d C =0$,
where
$H = \frac{1}{(n+1)!}
H{}_{i_0 \cdots i_{n}}(x)
dx^{i_0}\wedge \cdots \wedge dx^{i_{n}}$
is a $(n+1)$-form,
and $C^a =\frac{1}{2} C^a{}_{ij}(x) dx^i \wedge dx^j$
is a $2$-form taking values in $E^*$.

We take the following canonical function $\alpha$:
\bea
\alpha =
\rho^{i}{}_a(x) u^{a} p_{i}
+ \frac{1}{2} f^a{}_{bc}(x) w_{a} u^{b} u^{c}
+ \frac{1}{n!} h_{a_1 \cdots a_n}(x)u^{a_1} \cdots u^{a_n}.
\eea
Direct calculation shows that 
$e^\delta_{\alpha}\Theta|_{\calL}=0$ produces the equations:
\begin{eqnarray}
&& \frac{\partial \rho^{i}{}_a}{\partial x^j} \rho^{j}{}_b
- \frac{\partial \rho^{i}{}_b}{\partial x^j} \rho^{j}{}_a
+ \rho^i{}_c f^c{}_{ab}
- C^a{}_{kl} \rho^{i}{}_c \rho^{k}{}_a \rho^{l}{}_b
=0,\label{higher1}
\\
&&
\frac{\partial f^a{}_{bc}}{\partial x^i}  \rho^{i}{}_d
+ f^a{}_{be} f^e{}_{cd}
- C^e{}_{ij} f^a{}_{be} \rho^{i}{}_c \rho^{j}{}_d
+ (bcd \ \mbox{completely skewsymmetric})=0,\label{higher2}
\\
&&
\frac{\partial h_{a_0 \cdots a_{n-1}}}{\partial x^i} \rho^i{}_{a_{n}}
+ h_{e a_2 \cdots a_{n}} f^e{}_{a_{0} a_{1}}
+ C^e{}_{ij} h_{e a_0\cdots a_{n-2}} \rho^i{}_{a_{n-1}} \rho^j{}_{a_{n}}
+
H{}_{j_0 \cdots j_{n}}(x)
\rho^{j_1}{}_{a_{0}} \cdots \rho^{j_{n}}{}_{a_{n}}
\nonumber \\
&&
+ (a_0\cdots a_{n} \ \mbox{completely skewsymmetric})
=0.\label{higher3}
\end{eqnarray}
Note that the derived bracket $\tbv{-}{-}$ is just the canonical
nondegenerate Poisson bracket on $\calL = T^*[n-1]E[1]$.
Since $\Gamma (E \otimes \wedge^{n-2} E^*)$ is identified as a 
subspace of 
$C^{\infty}(T^*[n]E[1])$,
we can define a bracket on
$\Gamma (E \otimes \wedge^{n-2} E^*)$
by the derived-derived bracket:
\begin{eqnarray}
[-,-]_D = - \tbv{\tbv{-}{\alpha_{\calL}}}{-}.
\end{eqnarray}
If $C=H=0$, we have $\tbv{\alpha_{\calL}}{\alpha_{\calL}}=0$.
From Equations (\ref{higher1}), (\ref{higher2}), and (\ref{higher3}), $[-,-]_D$
is just the higher Dorfman bracket on $E \otimes \wedge^{n-2} E^*$.
If $C$ or $H$ is nonzero,
$(T^*[n-1]E[1],\tbv{-}{-},\alpha_{\calL})$ 
is a twisted QP manifold.
In particular, if $C=0$ and $H$ is nonzero, 
we obtain a twisted higher Dorfman bracket 
where the Leibniz identity of the Dorfman bracket is broken by
a closed $n+1$-form $H$.
\end{example}
\begin{example}
A \textbf{Nambu-Poisson bracket} of order $n$ $(\geq 3)$
on $M$ is a skewsymmetric
linear map
$\{\cdot, \cdots , \cdot \}:
C^{\infty}(M)^{\otimes n} \longrightarrow C^{\infty}(M)$ such that
\begin{eqnarray}
&& (1) \quad \{f_{\sigma(1)}, f_{\sigma(2)}, \cdots , f_{\sigma(n)} \}
= (-1)^{\epsilon(\sigma)} \{f_1, f_2, \cdots , f_n \},
\nonumber \\
&& (2) \quad \{f_1 g_1, f_2, \cdots , f_n \}
= f_1 \{g_1, f_2, \cdots , f_n \}
+ g_1 \{f_1, f_2, \cdots , f_n \},
\nonumber \\
&& (3) \quad \{f_1, f_2, \cdots, f_{n-1}, \{ g_1, g_2,
\cdots, g_{n} \} \}
\nonumber \\
&&
= \sum_{k=1}^n
\{g_1, \cdots, g_k,
\{ f_1, f_2, \cdots, f_{n-1}, g_k \},
g_{k+1}, \cdots, g_{n} \}.
\nonumber
\end{eqnarray}

The $n$-vector field $\pi\in\Gamma( \wedge^n TM)$ is called
the Nambu-Poisson tensor field, and it is defined as
$$\pi(df_1, df_2, \cdots , df_n ) = \{f_1, f_2, \cdots , f_n \}.$$

Let us assume that the Nambu-Poisson tensor is decomposable.
Then a canonical function for the Nambu-Poisson structure is constructed 
as a special case of Example \ref{ngeneral},
as follows. 
Consider an N-manifold
$T^*[n]\calL = T^*[n]T^*[n-1]E[1]$, where $E= \wedge^{n-1} T^*M$.
We take local coordinates $(x^i, v_I, p_i, w^I)$ of degree $(0,1,n-1,n-2)$ 
on $T^*[n-1]E[1]$ and conjugate local coordinates
$(\xi_i, u^I, q^i, z_I)$ of degree $(n, n-1, 1, 2)$ on the fiber,
where $I$ is the multi-index $I = (i_1, i_2, \cdots, i_{n-1})$.

The canonical graded symplectic structure of degree $n$ can be expressed
as
$$\omega = \delta x^i \wedge \delta \xi_i
+ (-1)^n \delta v_I \wedge \delta u^I
+ \delta p_i \wedge \delta q^i
+ (-1)^n \delta w^I \wedge \delta z_I.
$$

A Q-structure function $\Theta$ is defined as
$$\Theta = - q^i \xi_i
+ \frac{1}{(n-1)!} z_I (u^I- q^{i_1} \cdots q^{i_{n-1}}),
$$
which automatically satisfies $\sbv{\Theta}{\Theta}=0$.
$\Theta$ defines the Dorfman bracket on $TM \oplus \wedge^{n-1} T^*M$
\cite{Dorfman}
by the derived bracket $[-,-]_D= - \sbv{\sbv{\Theta}{-}}{-}$.
We take the function $\alpha$ to be
\begin{eqnarray}
\alpha = - \frac{1}{(n-1)!} \pi^{i_1 \cdots i_{n-1} i_n}(x)
v_{i_1 \cdots i_{n-1}} p_{i_n}.
\end{eqnarray}
\begin{proposition}
Let $\calL=T^*[n-1]E[1]$ be the Lagrangian submanifold of $T^*[n]\calL$.
Then $\alpha$ is a canonical function with respect to $\Theta$ and $\calL$, 
i.e., $e^{\delta_\alpha} \Theta|_{\calL}=0$
if and only if $\pi$ is a decomposable Nambu-Poisson tensor.
{\rm \cite{Bouwknegt:2011vn}}
\end{proposition}
\end{example}

\subsection{Strong Courant Algebroids
}\label{sectionSCA}
\noindent
Let $E$ and $A$ be two vector bundles on $M$.
We consider an N-manifold
$T^*[3] \calL = T^*[3] (E[1] \oplus A[2])$.
Let us take local coordinates $(x^i, u^a, z_p)$ on $M$,
$E[1]$ and $A[2]$,
and local coordinates $(\xi_i, v_a, w^p)$ 
of the fiber of $T^*[3]$ of degree $(3, 2, 1)$, 
respectively.

A canonical graded symplectic structure is
$\omega = \delta x^i \wedge \delta \xi_i
- \delta u^a \wedge \delta v_a
+ \delta w^p \wedge \delta z_p$.
%
We define a Q-structure function as
\begin{eqnarray}
\Theta
&=&
\frac{1}{2}
k^{ab} v_{a} v_{b}
+ \rho^i{}_r(x) \xi_{i} w^{r}
+ \frac{1}{2}
C^r{}_{pq}(x) z_r w^p w^q,
\end{eqnarray}
where $k^{ab}$ is a fiber metric on $E$.
$\sbv{\Theta}{\Theta}=0$ is equivalent to the following identities:
\begin{eqnarray}
&& \rho^i{}_r \frac{\partial \rho^i{}_s}{\partial x^j}
- \rho^i{}_s \frac{\partial \rho^i{}_r}{\partial x^j}
- \rho^i{}_p C^p{}_{rs}=0,
\nonumber \\&&
- \rho^i{}_p \frac{\partial C^s{}_{qr}}{\partial x^i}
+ C^s{}_{pt} C^t{}^{qr} + (pqr \ \mbox{cyclic})=0.
\end{eqnarray}
This condition is satisfied if $(A, C, \rho)$ is a Lie algebroid,
where 
a Lie bracket is $[e_p, e_q]_A = C^r{}^{pq}(x) e_r$ and
$\rho$ is a bundle map from $A$ to $TM$ defined by 
$\rho(e_r) = \rho^i{}_r(x) \frac{\partial}{\partial x^i}$
for $e_r \in \Gamma A$.

Let the Lagrangian submanifold
${\calL} \subset T^*[3] \calL$ be
$\{
\xi_i = v_a  =
w^p = 0 \},$
and let a canonical function of degree $3$ with 
respect to the Lagrangian submanifold ${\calL}$
be
\begin{eqnarray*}
\alpha
=
\tau^r{}_{a}(x) z_r u^a
+ \frac{1}{3!} h_{abc}(x) u^a u^b u^c.
\end{eqnarray*}
The canonical function equation $e^{\delta_\alpha} \Theta |_{\calL}=0$ 
is equivalent to
$\Theta |_{{\calL}^{\prime}}=0$,
where ${\calL}^{\prime}$ is a Lagrangian submanifold
with respect to the inverse canonical transformation of the P-structure,
$\omega^{\prime} = - d(e^{-\delta_\alpha} \vartheta)$.
${\calL}^{\prime}$
is defined by
\begin{eqnarray}
\xi_i &=& \sbv{\xi_i}{\alpha}
=
- \frac{\partial \tau^r{}_{a}}{\partial x^i}(x) z_r u^a
- \frac{1}{3!} \frac{\partial h_{abc}}{\partial x^i}(x) u^a u^b u^c,
\nonumber \\
v_a &=& \sbv{v_a}{\alpha}
=
- \tau^r{}_{a}(x) z_r
- \frac{1}{2} h_{abc}(x) u^b u^c,
\nonumber \\
w^r &=& \sbv{w^r}{\alpha}
= \tau^r{}_{a}(x) u^a.
\end{eqnarray}
Substituting this equation into $\Theta |_{{\calL}^{\prime}}=0$, 
we obtain the conditions for the canonical function, 
as follows:
\begin{eqnarray}
&& k^{ab} \tau^r{}_a \tau^s{}_b =0,
\label{StrongCourant1}
\\
&&
k^{cd} \tau^r{}_c h_{dab}
+ \rho^{i}{}_s \tau^s{}_a \frac{\partial \tau^{r}{}_b}{\partial x^i}
- \rho^{i}{}_s \tau^s{}_b \frac{\partial \tau^{r}{}_a}{\partial x^i}
+ C^r{}_{pq} \tau^{p}{}_a \tau^{q}{}_b
=0,
\label{StrongCourant2}
\\
&&
\rho^i{}_{r} \tau^r{}_{d}
\frac{\partial h_{abc}}{\partial x^i}
- \frac{1}{2} k^{ef} h_{eab} h_{fcd}
+ (abcd \ \mbox{complete skewsymmetric})
=0.
\label{StrongCourant3}
\end{eqnarray}

Now let us analyze the geometric structure
defined by this canonical function.
Let $\tau:E\longrightarrow A$ be the bundle map defined 
by the above $\tau^r{}_d$ on local charts.
Moreover, we define the following operations
by the derived-derived bracket:
\begin{eqnarray}
[e_1,e_2]_D&=&-\{\{e_1,\alpha_{\calL}\}_{\Theta},e_2\}_{\Theta},\\
\langle e_1,e_2\rangle&=&\{e_1,e_2\}_{\Theta},\\
\calD(f)&=&\{\alpha_{\calL},f\}_{\Theta},
\end{eqnarray}
where $\{-,-\}_{\Theta}:=-\{\{-,\Theta\},-\}|_{\calL}$ is the derived bracket, 
$[-,-]_D$ is a bilinear bracket, $\langle-,-\rangle$ is 
an inner product on $\Gamma(E)$, and $\calD$ 
is a map from $C^\infty(M)$ to $\Gamma(E)$.

By Equations (\ref{StrongCourant1}), (\ref{StrongCourant2}), and
(\ref{StrongCourant3}) for $e_1,e_2,e_3\in\Gamma(E)$ and
$\xi_1,\xi_2\in\Gamma(A^*)$, we have
\\
$(a)$ $\tau[e_1,e_2]_D=[\tau e_1,\tau e_2]_A$,
\\
$(b)$ $\langle\tau^*(\xi_1),\tau^*(\xi_2)\rangle=0$,
\\
$(c)$ $[e_1,e_1]_D=\calD\langle
e_1,e_1\rangle=(\rho\circ\tau)^*d\langle e_1,e_1\rangle$,
\\
$(d)$ $[e_1,[e_2,e_3]_D]_D=[[e_1,e_2]_D,e_3]_D+[e_2,[e_1,e_3]_D]_D$,
\\
$(e)$ $\rho\circ\tau(e_1)\langle e_2,e_3\rangle=\langle [e_1,e_2]_D,e_3\rangle+\langle e_2,[e_1,e_3]_D\rangle$.

We call this structure 
a strong Courant algebroid.
\begin{definition}
A \textbf{strong Courant algebroid} is
$(E,\tau,\langle -, - \rangle, [-,-]_D,A,\rho,[-,-]_A)$
satisfying the equation $(a)-(e)$, where $E$ is a vector bundle on
a manifold $M$, $\langle -, - \rangle$ is an inner product on $E$,
$[-,-]_D$ is a bilinear operator, $(A,\rho, [-,-]_A)$ is a Lie
algebroid on M, and $\tau:E\longrightarrow A$ is a bundle map.
\end{definition}
Roughly speaking, a strong Courant algebroid is a Courant algebroid over a Lie algebroid. Obviously, any Courant algebroid is naturally a strong Courant algebroid.
\begin{proposition}
Let $(E,\tau,\langle -, - \rangle, [-,-]_D,A,\rho,[-,-]_A)$
be a strong Courant algebroid.
Then
$(E,\tau \circ \rho,\langle -, - \rangle, [-,-]_D)$
is a Courant algebroid.
\end{proposition}
Thus for any strong Courant algebroid,
there exists a Courant algebroid associated with it.
\begin{example}
If $M$ is a point, the strong Courant algebroid is a triple 
$(\mathfrak{g}_1, \mathfrak{g}_2,\tau)$, where $\mathfrak{g}_1$ 
is a quadratic Lie algebra, $\mathfrak{g}_2$ is a Lie algebra, and 
$\tau$ is a homomorphism from $\mathfrak{g}_1$ to $\mathfrak{g}_2$ 
such that $\tau^*(\mathfrak{g}_2^*)$ is an isotropic 
subspace of $\mathfrak{g}_1$. 
Any Lie bialgebra is an example of the strong Courant algebroid.
\end{example}
\begin{example}
Let $A$ be a Lie algebroid. An inner product on $A \oplus A^*$ 
is defined by the natural pairing of $A$ and $A^*$, 
an anchor map $\tau$ is defined by the natural projection 
from $A \oplus A^*$ to $A$. 
It is clear that $(A \oplus A^*,\tau,A,[-,-]_D)$ gives a 
strong Courant algebroid, where
$[-,-]_D$ is the Dorfman bracket given by
\[[X+\xi,Y+\eta]_D=[X,Y]_A+L_X\eta-i_Yd\xi,\]
$X,Y\in\Gamma(A)$, $\xi,\eta\in\Gamma(A^*)$,
and $L$ and $d$ are the Lie derivative and the de Rham differential
associated to $A$, respectively.
\end{example}
\begin{example}
Let $P$ be a $G$-principal bundle over $M$.
Since $G$ acts on $TP \oplus T^*P$ naturally, we get a bundle 
$\frac{TP \oplus T^*P}{G}$ over $M$ by reduction, 
where the $G$-invariant sections of
$TP \oplus T^*P$ reduce to the sections of
the bundle $\frac{TP \oplus T^*P}{G}$.

We define a bundle map $\tau$ by the natural projection 
from $\frac{TP \oplus T^*P}{G}$
to the Atiyah algebroid $\frac{TP}{G}$.
Because of the $G$-invariance,
the canonical Dorfman bracket and the natural pairing on $TP \oplus T^*P$ 
induce a bracket $[-,-]_D$ and an inner product
$\langle-,-\rangle$ on $\Gamma(\frac{TP \oplus T^*P}{G})$.
It is easy to confirm that
$\left(\frac{TP \oplus T^*P}{G},\tau,\langle-,-\rangle,[-,-]_D,\frac{TP}{G}
\right)$
is a strong Courant algebroid.
\end{example}

Following the discussion in Subsection \ref{twistedQPmfd}, 
we can easily generalize the strong Courant algebroid 
to the twisted version.
For most of the concepts that appear in the Courant
algebroids, parallels can be introduced in the strong Courant algebroids. 
The A-connections and morphisms between Lie algebroids
\cite{Fernandes} will play a key role in the further study of the strong 
Courant algebroids.

\section{Twisted QP Manifolds and bulk-boundary AKSZ sigma models
}\label{TwistedQPAKSZ}
\subsection{QP Manifolds as QP Pairs}
\label{fromQPtoQPpair}
\noindent
As a physical application, we will show how a QP pair 
encodes the bulk-boundary correspondence
of AKSZ sigma models.

The following theorem justifies the view that a twisted QP manifold 
is a generalization of a QP manifold.
%
\begin{theorem}\label{derivedQPtheorem}
A QP manifold is a twisted QP manifold.
\end{theorem}
\begin{proof}
Let $(\calL,\omega_{\calL}, \alpha_{\calL})$ be a QP manifold of degree $n$.
We consider the Poisson bracket $\sbv{-}{-}_{\calL}$ for 
the symplectic structure $\omega_{\calL}$.
We can construct a Poisson bivector field $\pi_{\calL}$
to define the Poisson bracket 
$\sbv{f}{g}_{\calL} = \pi_{\calL}(\delta f, \delta g)$, where
$f, g \in C^{\infty}(\calL)$
and $\delta$ is a differential on $\calL$.

Consider a shifted cotangent bundle $T^*[n+1]\calL$,
and choose a canonical graded symplectic structure $\omega = \omega_{can}$
on $T^*[n+1]\calL$. 
%
From the theory of supergeometry, we have that
a bivector field on $\calL$ can be identified as a function
on $T^*[n+1] \calL$.
Therefore we obtain $\Theta \in C^{\infty}(T^*[n+1] \calL)$
corresponding to $\pi_{\calL}$.
We can easily prove that $\pi_{\calL}(\delta f, \delta g) 
= \sbv{\sbv{\pi^* f}{\Theta}_{can}}{\pi^* g}_{can}|_{\calL}$,
which is similar to the formula for the usual Poisson bracket.
Therefore the small Poisson bracket $\sbv{-}{-}_{\calL}$ is rederived from the 
derived bracket of the big bracket.

Note that $\alpha = \pi^* \alpha_{\calL}$ is a canonical function
for this $\Theta$ because 
$\sbv{\alpha_{\calL}}{\alpha_{\calL}}_{\calL}
= \sbv{\sbv{\alpha}{\Theta}_{can}}{\alpha}_{can}|_{\calL}=0$.
Therefore, 
$(T^*[n+1]\calL, \omega_{can}, \Theta, \alpha)$ is
a QP pair, and
the original QP manifold $\calL$ is its small QP manifold.


Let us describe it using a local coordinate. 
Let $q^i$ be a local coordinate of degree $|q^i|$ on $\calL$
such that $\sbv{q^i}{q^j}_{\calL}= (\omega_{\calL}^{-1})^{ij}(q)$.
There exists a local Darboux coordinate $p_i$ of degree $|p_i|$ on the fiber of
$T^*[n+1]\calL$ such that 
$$\sbv{q^i}{p_j}_{can} = -(-1)^{(|q^i|-n-1)(|p_j|-n-1)} 
\sbv{p_j}{q^i}_{can} = \delta^i{}_j,$$
where $\sbv{-}{-}_{can}$ is the Poisson bracket defined from the 
canonical graded symplectic form $\omega_{can}$ on $T^*[n+1]\calL$.
If we define 
$\Theta = -(-1)^{(|q^i|-n-1)(|p_j|-n-1)}
\frac{1}{2} (\omega_{\calL}^{-1})^{ij}(q) p_i p_j$, 
then a straightforward computation gives
$\sbv{\Theta}{\Theta}_{can}=0$ and
$\sbv{-}{-}_{\calL} = \{\{-, \Theta \}_{can},-\}_{can}|_{\calL}$.

This construction shows that all the terms 
$\Theta|_{\calL}$,
$\{\Theta,\alpha\}_{can}|_{\calL}$,
$\{\{\Theta,\alpha\}_{can},\alpha\}_{can}|_{\calL}$, \\
$\{\{\{\Theta,\alpha\}_{can},\alpha\}_{can},\alpha\}_{can}|_{\calL}, \cdots$,
are zero 
for the canonical $\Theta$.
Therefore, we obtain
\begin{eqnarray}
e^{\delta_\alpha}\Theta|_\calL
&=&\Theta|_{\calL}
+\{\Theta,\alpha\}_{can}|_{\calL}
+\frac{1}{2}\{\{\Theta,\alpha\}_{can},\alpha\}_{can}|_{\calL}
+\frac{1}{3!}\{\{\{\Theta,\alpha\}_{can},\alpha\}_{can},\alpha\}_{can}|_{\calL}+\cdots
\nonumber \\
&=&0.
\end{eqnarray}
Thus, $\alpha$ is a canonical function. \qed
\end{proof}
From the proof of this theorem, there exists a
QP pair for any QP manifold.

\subsection{Twisted QP Manifolds from Deformations}
\label{QPdeformationtheory}
\noindent
In this subsection, we show how twisted QP structures 
on $\calL$ can be obtained by the deformation
of the canonical $\Theta$ on $T^*[n+1]\calL$. 
As an application, we can add ``fluxes'' 
to $\calL$ by deforming the corresponding 
canonical homological function on $T^*[n+1]\calL$.
This also leads to a common method 
for constructing a twisted QP manifold.

First, we take a P-manifold $(\calL, \omega_{\calL})$ of degree $n$.
From the proof of Theorem \ref{derivedQPtheorem}, there exists
a QP realization $(T^*[n+1]\calL,\{-,-\},\Theta)$
with $\alpha=0$, where
$\Theta$ is the canonical Q-structure.
By definition, $\tbv{-}{-} = \sbv{\sbv{-}{\Theta}}{-}|_{\calL}$
coincides with the original Poisson bracket derived from $\omega_{\calL}$. 

Next, we consider a deformation of $\Theta$, $\Theta_d = \Theta +
\Theta^{\prime}$, such that $\sbv{\Theta_d}{\Theta_d}=0$. 
This can be rewritten as the Maurer-Cartan equation,
$\{\Theta,\Theta^{\prime}\} + \frac{1}{2}
\sbv{\Theta^{\prime}}{\Theta^{\prime}} =0$.
For simplicity, we concentrate on the deformation in which
the derived P-structure on $\calL$ is not changed,
i.e., $\{\{-,\Theta_d \},-\}|_{\calL}
=\{\{-,\Theta\},-\}|_{\calL}$.
One such solution is the deformation for which 
$\Theta^{\prime}|_{\calL^{\perp}}=0$.

A function $\alpha$ of degree $n+1$ on $\calL$
is a canonical function on $(T^*[n+1]\calL, \{-,-\},\Theta_d)$ 
with respect to $\calL$ if and only if
\begin{eqnarray}\label{deformedtheta}
e^{\delta_{\alpha}} \Theta_d |_{\calL} =
e^{\delta_{\alpha}} (\Theta + \Theta^{\prime}) |_{\calL} = 0.
\end{eqnarray}
%
From the above assumption, 
the equation $e^{\delta_{\alpha}}\Theta|_{\calL}=0$ 
is equivalent to the master equation 
$\{\alpha_{\calL},\alpha_{\calL}\}_{\Theta}=0$
for the canonical homological function $\Theta$ on $T^*[n+1]\calL$. 
However, the equation \eqref{deformedtheta} for $\Theta_d$ generally 
breaks the classical master equation.
To see this, we recall that
$e^{\delta_{\alpha}} \Theta_d |_{\calL} =0$ produces
the equation
\[
\frac{1}{2}\{\alpha_{\calL},\alpha_{\calL}\}_{\Theta_d}= -
(\Theta+\Theta')|_{\calL} -\{\Theta+\Theta',\alpha\}|_{\calL}
-\frac{1}{3!}\{\{\{\Theta+\Theta',\alpha\},\alpha\},\alpha\}|_{\calL} \cdots.
\]
Note that for a canonical $\Theta$, the terms
$\Theta|_{\calL}$ and
$\{\{...\{\Theta,\alpha\},...,\alpha\},\alpha\}|_{\calL}$ on the
right-hand side are equal to zero.
However, the terms involving a higher bracket between 
$\Theta'$ and $\alpha$, for example,
$\{\{\{\Theta^{\prime}, \alpha\},\alpha\},\alpha\}|_{\calL}$, may be nonzero.
Also, $\{\alpha_{\calL},\alpha_{\calL}\}_{\Theta_d}$ 
is generally nonzero. 
Therefore, a solution $\alpha$ of the canonical function of Equation 
\eqref{deformedtheta}
with respect to 
$\Theta_d$ 
gives a twisted QP manifold
$(\calL, \sbv{-}{-}_{{\Theta}_d}, \alpha_{\calL})$.
The following general proposition is obtained.
\begin{proposition}
Let $(\calL,\omega_{\calL})$
be a P-manifold, and let $(T^*[n+1]\calL,\omega, \Theta)$ be
the canonical QP realization given in Theorem \ref{derivedQPtheorem}.
Then there is a twisted QP manifold
$(\calL,\omega_{\calL}, \alpha_{\calL})$
associated with any function $\Theta^{\prime}$ of degree $n+2$ 
on $T^*[n+1]\calL$ satisfying
\begin{enumerate}
\item $\{\Theta,\Theta^{\prime}\} 
+ \frac{1}{2} \sbv{\Theta^{\prime}}{\Theta^{\prime}} =0$;
\item
$\{\{-,\Theta' \},-\}|_{\calL}=0$;
\item
$
e^{\delta_{\alpha}} \Theta' |_{\calL}=0$,
\end{enumerate}
where $\alpha = \pi^* \alpha_{\calL}$.
In this case, we say that the twisted QP 
manifold $(\calL,\omega, \alpha_{\calL})$ is twisted by $\Theta'$.
\end{proposition}

One can find an equivalent description of a twisted QP manifold 
in the frame of homotopy Poisson manifolds in \cite{xiaomeng}, 
where the authors encode deformations of the canonical homological function 
on $(T^*[n+1]\calL,\omega)$ by higher derived brackets. 
A deformation of the homological function on $T^*[n+1]\calL$ 
induces a homotopy Poisson algebra on $C^{\infty}(\calL)$, 
and the canonical transformation equation $e^{\delta_\alpha}\Theta|_{\calL}=0$
becomes the Maurer-Cartan equation for the underlying $L_{\infty}$-algebra, 
which can be seen as a homotopy version of the normal master equation. 
As a result, a twisted QP manifold can be viewed as a homotopy 
version of a QP manifold.

\subsection{Bulk-Boundary Correspondence of AKSZ Sigma Models}
\label{bulkboundaryAKSZ}
\noindent
Generally, a kind of (topological) quantum field theory in $n+1$
dimensions on $X$
has the same physical constants as
a quantum field theory in $n$ dimensions on 
the boundary $\partial X$.
Theorem \ref{derivedQPtheorem} says that
there is another QP manifold associated 
with any (twisted) QP manifold.
Therefore, we obtain a pair of AKSZ sigma models 
associated with one QP manifold.
This is can be interpreted physically as the
bulk-boundary correspondence (holographic correspondence) 
of physical models.


Let us take a QP pair
$(T^*[n+1]\calL, \omega, \Theta, \calL, \alpha)$.
First, we suppose that 
$\alpha$ satisfies $\tbv{\alpha_{\calL}}{\alpha_{\calL}}=
\sbv{\sbv{\alpha}{\Theta}}{\alpha}|_{\calL} =0$,
i.e., $(\calL, \tbv{-}{-}, \alpha_{\calL})$
is a nontwisted QP manifold, where $\alpha = \pi^* \alpha_{\calL}$
with $\pi: T^*[n+1]\calL \longrightarrow \calL$.

We obtain two AKSZ sigma models from these data.
Let $X$ be a manifold in $n+1$ dimensions with boundaries.
By the AKSZ construction, the original big QP manifold
$(T^*[n+1]\calL, \omega, \Theta, \calL, \alpha)$ 
defines an AKSZ sigma model with a boundary 
on $\calX = T[1]X$.
A P-structure is given by
$\bomega = \mu_* \ev^* \omega$, and
a Q-structure by (\ref{akszwithboundary}) 
or, equivalently, by (\ref{ChangeS}).

On the other hand, with the AKSZ construction,
the small QP manifold
$(\calL, \tbv{-}{-}, \alpha_{\calL})$
defines an AKSZ sigma model on $\partial \calX = T[1]\partial X$.
The Q-structure is 
\begin{eqnarray}
S_{\calL} &=& S_{\calL 0} + S_{\calL 1}
\nonumber \\
&=& \iota_{\hat{D}_{\partial X}}
\mu_{\partial \calX *} {\rm ev}^*
\vartheta_{\Theta}
+ \mu_{\partial \calX *} 
(i_{\partial} \times {\rm id})^*
\ev^* \, 
\alpha,
\label{akszonboundary}
\end{eqnarray}
where
$\omega_{\Theta} = - \delta \vartheta_{\Theta}$.

These two AKSZ sigma models, which are constructed from one QP pair, 
have a canonical correspondence.
Physical arguments show that the 
BV actions $S$ and $S_{\calL}$ correspond.
First we consider the bulk Q-structure function 
(\ref{akszwithboundary}) on $\calX$.
The Q-structure (\ref{akszonboundary}) 
on the Lagrangian submanifold $\calL$
is obtained by using Stokes' theorem
and
integrating out the auxiliary superfields.
(i.e., $S$ and $S_{\calL}$ derive the same orbits 
of the equations of motion on $\partial \calX$.)
The boundary P-structure is obtained as
$\omega_{\Theta} = - \delta \vartheta_{\Theta}$
from the first term 
$\iota_{\hat{D}_{\partial X}}
\mu_{\partial \calX *} {\rm ev}^*
\vartheta_{\Theta}$
in the resulting Q-structure $S_{\calL}$.
This AKSZ sigma model on $\partial\calX$
is equivalent to the one constructed from the small QP manifold.

The field configurations for the two AKSZ sigma models 
satisfy the following commutative diagram:
$$
\begin{CD}
\calX @> \phi_{\calX} >> T^*[n+1]\calL \\
@A i_{\calX} AA @A i_{\calL} AA \\
\partial \calX @> \phi_{\partial\calX} >> \calL
\end{CD}
$$
where
$\phi_{\calX} \in \Map(\calX, T^*[n+1]\calL)$,
$\phi_{\partial\calX} \in \Map(\partial\calX, \calL)$,
$i_{\calX}$ is the inclusion of the boundary into the manifold,
and $i_{\calL}$ is the inclusion as the zero section
such that $\pi \circ i_{\calL} = \textrm{id}_{\calL}$.
This demonstrates the bulk-boundary correspondence
of AKSZ sigma models.


We denote 
a QP pair
$(T^*[n+1]\calL, \omega, \Theta, \calL, \alpha)$
by $\QPM_{T^*[n+1]\calL}$
and a QP manifold without a canonical function
$(\calL, \omega_s, \alpha)$
by $\QPM^0_{\calL}$.
Let $\AKSZ_{\calX, \calL}$ be an AKSZ sigma model
on $\Map(\calX, \calL)$, and let
 $\AKSZ^0_{\calX, \calL}$ be an AKSZ sigma model
without a boundary (without a canonical function).
Schematically, the following diagram
shows two procedures:
$$
\begin{CD}
\QPM_{T^*[n+1]\calL} @> >> \AKSZ_{\calX, T^*[n+1]\calL} \\
@V  VV @V  VV \\
\QPM^0_{\calL} @>  >> \AKSZ^0_{\partial \calX, \calL}
\end{CD}
$$
Thus, these two methods 
produce the same AKSZ sigma model $\AKSZ^0_{\partial \calX, \calL}$.
In one method, we construct an AKSZ sigma model with a boundary 
and then reduce the theory to the boundary.
In the other method, we reduce a QP pair 
to a small QP manifold and then construct an AKSZ sigma model.

Conversely, we can take a QP manifold of degree $n$,
$(\calL, \sbv{-}{-}_{\calL}, \alpha_{\calL})$.
Following Theorem \ref{derivedQPtheorem}, we have a canonical lift
$(T^*[n+1]\calL, \omega, \Theta, \calL, \alpha)$.
Although in general, a big QP manifold is not unique to 
one small QP manifold,
we can show that for any lift
$(T^*[n+1]\calL, \omega, \Theta, \calL, \alpha)$,
the resulting small AKSZ sigma models on $\calL$
have the equivalent QP structures.
The equivalence relation for $\Theta$ for a fixed $\omega$
has the following physical interpretation in the AKSZ sigma models.
Two Q-structure functions are equivalent, 
$S^{\prime}_{T^*[n+1]\calL} \sim S_{T^*[n+1]\calL}$,
if 
$S^{\prime}_{T^*[n+1]\calL} = S_{T^*[n+1]\calL} + S_{{\sc bulk}}$
on $\calX$ have the same boundary conditions, 
that is, $S_{{\sc bulk}}$ has the boundary condition such that 
$S_{{\sc bulk}}|_{\partial \calX}=0$.


\subsection{Bulk-Boundary Correspondence of Twisted AKSZ Sigma Models
induced from General QP Pairs}
\noindent
In this subsection, we consider general cases, i.e., 
$\tbv{\alpha_{\calL}}{\alpha_{\calL}} \neq 0$.
Given a QP pair,
$(T^*[n+1]\calL, \omega, \Theta, \calL, \alpha)$, then
by definition, $(\calL, \sbv{-}{-}_{\calL}, \alpha_{\calL})$ 
is a twisted QP manifold, 
where $\pi: T^*[n+1]\calL \longrightarrow \calL$ is a 
natural projection
and $\alpha = \pi^* \alpha_{\calL}$.
%

Let us consider the AKSZ sigma models.
Let $X$ be a manifold in $n+1$ dimensions with a boundary.
By the AKSZ construction in Subsection \ref{sectionAKSZwboundary},
there is an AKSZ sigma model with a boundary from 
$\calX=T[1]X$ 
to $T^*[n+1]\calL$.
The Q-structure function is given 
in Equation (\ref{akszwithboundary}).
A boundary theory can be constructed by the method
inspired by physical arguments. 
The field configurations for two AKSZ sigma models 
are required to satisfy the following commutative diagram:
$$
\begin{CD}
\calX @> \phi_{\calX} >> T^*[n+1]\calL \\
@A i_{\calX} AA @A i_{\calL} AA \\
\partial \calX @> \phi_{\partial\calX} >> \calL
\end{CD}
$$
where $\pi \circ i_{\calL} = \textrm{id}_{\calL}$.
We restrict the equations of motion of the big AKSZ sigma model on $\calX$
to the boundary $\partial\calX$.
The derived bracket P-structure and
the Q-structure function
on the Lagrangian submanifold $\Map(\partial \calX, \calL)$ 
are constructed by using Stokes' theorem 
and integrating out 
the auxiliary superfields from the bulk Q-function $S$, in a manner
similar to that for the AKSZ sigma model cases in Subsection 
\ref{bulkboundaryAKSZ}
\cite{Park2000au}\cite{Bouwknegt:2011vn}.

The P-structure on $\Map(\partial \calX, \calL)$ is 
the same as the AKSZ construction on a small P-manifold.
The Poisson bracket derived from 
$\bomega_{\calL} = \mu_{\calL *} \ev^* \omega_{\calL}$ 
on $\calL$
is equal to the derived bracket
$
\sbv{\sbv{-}{-}}{-}|_{\calL}$
with respect to $\omega_{\calL}$.

The bulk Q-structure is expressed by 
Equation (\ref{akszwithboundary}), and
we use the equivalent Q-structure function (\ref{ChangeS}).
By restricting $\Map(\calX, \calM)$ 
to orbits of solutions in the equations of motion 
with respect to the compliment space $\calL^{\perp} = T^*[n+1]\calL/\calL$
by solving $\delta_{\calL^{\perp}} S^{\prime\prime}=0$
and using Stokes' theorem,
the bulk Q-structure $S^{\prime\prime}$
reduces to the boundary twisted Q-structure $S_{\calL}$.
We will demonstrate this procedure in Example \ref{twistedPSM}.
$S_{\calL}$ has extra terms that are not generated
by the original AKSZ construction on $\Map(T[1] \partial X, \calL)$.
A direct calculation shows that the boundary Q-structure function
is the following untwisted Q-structure function 
plus extra terms:
\begin{eqnarray}
S_{\calL} &=& S_{\calL, \partial\calX} + S_{\calL, \calX}
\nonumber \\
&=& \iota_{\hat{D}_{\partial X}}
\mu_{\partial \calX *} {\rm ev}^*
\vartheta_{\Theta}
- \mu_{\partial \calX *} (i_{\partial} \times {\rm id})^*
\ev^* \, \alpha
\nonumber \\
&&
+ \left(\mu_* \ev^* \Theta 
- \frac{1}{2} \mu_* \ev^* \sbv{\sbv{\vartheta}{\alpha}}{\alpha}
\cdots \right)|_{\delta_{\calL^{\perp}} S=0},
\label{akszonboundary2}
\end{eqnarray}
where $S_{\calL, \partial\calX}$ is a Q-structure function 
in the AKSZ construction, 
$S_{\calL, \calX}$ is an extra term from
the integration over $\calX$.
(Note that $\mu_*$ is the integration over $\calX$.)
The last terms are physically called Wess-Zumino terms. 
In general, the master equation is not satisfied because of the presence of $S_{\calL, \calX}$.
If $\sbv{\alpha}{\alpha}=0$
but $\sbv{\alpha_{\calL}}{\alpha_{\calL}}_{\Theta}\neq 0$,
the WZ terms can be written as the following expression
by using Equation (\ref{AKSZbulk}):
\begin{eqnarray}
S_{\calL} 
&=& \iota_{\hat{D}_{\partial X}}
\mu_{\partial \calX *} {\rm ev}^*
\vartheta_{\Theta}
- \mu_{\partial \calX *} (i_{\partial} \times {\rm id})^*
\ev^* \, \alpha
+ \mu_* \ev^* \Theta|_{\delta_{\calL^{\perp}} S=0}.
\label{akszonboundary3}
\end{eqnarray}

We call this construction of a topological sigma model,
$(\Map(\partial \calX, \calL), \sbv{-}{-}_{\calL}, S_{\calL})$, 
the twisted AKSZ construction, and we call the resulting model 
the twisted AKSZ sigma model.
The following schematic diagram expresses our construction procedure:
$$
\begin{CD}
\QPM_{T^*[n+1]\calL} @> >> \AKSZ_{\calX, T^*[n+1]\calL} \\
@V  VV @V  VV \\
\tQPM^0_{\calL} @>  >> \tAKSZ^0_{\partial \calX, \calL}
\end{CD}
$$
Here
$\tQPM^0_{\calL}$ is a twisted QP manifold on $\calL$
and 
$\tAKSZ^0_{\partial \calX, \calL}$ is a twisted AKSZ sigma model
defined on the boundary $\partial\calX$.
In one method, we construct an AKSZ sigma model with a boundary 
and then reduce the theory to the boundary.
In the other method, we reduce a QP pair 
to a small twisted QP manifold and then construct a twisted AKSZ sigma model.
The two methods produce the same twisted 
AKSZ sigma model $\tAKSZ^0_{\partial \calX, \calL}$.

There is an ambiguity in the definition of an AKSZ sigma model pair because 
there is an ambiguity in how we should take a big QP manifold for a fixed twisted QP manifold.
The cohomology class of $\Theta$ for the big bracket 
determines the equivalence class of this ambiguity.
In the language of sigma models, 
if $S_{{\sc bulk}}|_{\partial \calX}=0$
for $S^{\prime}_{T^*[n+1]\calL} = S_{T^*[n+1]\calL} + S_{{\sc bulk}}$
on $\calX$,
then $S$ and $S^{\prime}$ define the same 
boundary in the twisted AKSZ sigma models.

We will now show some examples of twisted AKSZ sigma models.

\begin{example}[$n=2$: Twisted-(or WZ-)Poisson Sigma Models]
\label{twistedPSM}
\noindent
We consider a QP pair for the case of 
$\Theta_C=\Theta_R=\rho=0$ in 
Example \ref{twistedPoissonexample}.
The big QP manifold is $T^*[2]\calL=T^*[2]T[1]M$,
and the QP structure defines the twisted Poisson structure
$[\pi_{\calL}, \pi_{\calL}]_S = \wedge^3 \pi^{\#} H$.

Let us take a three-dimensional manifold $X$ for which the
boundary is a two-dimensional manifold
$\partial X$.
The AKSZ sigma model 
is defined on $\Map(T[1]X, T^*[2]T[1]M)$ as follows.
The P-structure is
\begin{eqnarray*}
\bomega &=& \int_{\calX}
\mu
\
(
\delta \bbx^{i} \wedge \delta \bbxi_{i} +
\delta \bp_i \wedge \delta \bq^{i}
),
\end{eqnarray*}
where the boldface letters are superfields induced from
the pullbacks by $\bbx^*$ of corresponding local coordinates.
If $\alpha =0$,
the Q-structure function has the following form:
\begin{eqnarray}
S&=& S_0 + S_1 = \int_{\calX}
\mu
\ \left(- \bbxi_{i} \bbd \bbx^{i}
+ \bbq^{i} \bbd \bbp_{i}
+
\bbxi_{i} \bq^{i}
+ \frac{1}{3!}
H_{ijk}(\bbx) \bq^{i} \bq^{j} \bq^{k}
\right).
\end{eqnarray}

In order to obtain the equations of motion
from the variational principle,
we take the variation of $S$,
\begin{eqnarray}
\delta S =
\int_{\calX}
\mu
\ \left(- \delta \bbxi_{i} \bbd \bbx^{i}
- \bbxi_{i} \bbd \delta \bbx^{i}
+ \delta \bbq^{i} \bbd \bbp_{i}
+ \bbq^{i} \bbd \delta \bbp_{i}
+
\delta
\left(\bbxi_{i} \bq^{i}
+ \frac{1}{3!}
H_{ijk}(\bbx) \bq^{i} \bq^{j} \bq^{k}
\right)
\right).
\end{eqnarray}
The equations of motion of $\bbxi$ and $\bbq$ are obtained by 
integration by parts.
Therefore, the boundary terms,
$- \bbxi_{i} \bbd \delta \bbx^{i} + \bbq^{i} \bbd \delta \bbp_{i}$,
must vanish:
\begin{eqnarray}
\delta S|_{\partial \calX} =
\int_{\partial \calX}
\mu_{\partial \calX}
\ \left(
- \bbxi_{i} \delta \bbx^{i}
- \bbq^{i} \delta \bbp_{i}
\right)|_{\partial \calX} = 0.
\label{boundaryintegration}
\end{eqnarray}
This determines the boundary conditions.
Equation (\ref{boundaryintegration})
is satisfied if $\vartheta = 0$
on ${\rm Im} \ \partial \calX$,
i.e., ${\rm Im} \ \partial \calX \subset \calL$.
Typically, two kind of boundary conditions are possible:
$\bbxi_{//i} = 0$ or $\delta \bbx_{//}^{i}=0$, and
$\bbq_{//}^{i} =0$ or $\delta \bbp_{//i}=0$,
where $//$ is the component that is parallel to the boundary.
(Hybrid boundary conditions are also possible.)

Let us take boundary conditions
$\bbxi_{//i} = 0$ and $\bbq_{//}^{i} =0$
as an example.
Another condition is that the boundary conditions
must be consistent with the classical master equation
$\sbv{S}{S}=0$.
Direct computation gives
\begin{eqnarray}
\sbv{S}{S} =
\int_{\partial \calX}
\mu_{\partial \calX}
\ \left.
\left(- \bbxi_{i} \bbd \bbx^{i}
+ \bbq^{i} \bbd \bbp_{i}
+ \bbxi_{i} \bq^{i}
+ \frac{1}{3!}
H_{ijk}(\bbx) \bq^{i} \bq^{j} \bq^{k}
\right) \right|_{\partial \calX}.
\label{boundarymasterequation}
\end{eqnarray}
Due to the boundary conditions
$\bbxi_{//i} = 0$ and $\bbq_{//}^{i} =0$,
the first two terms corresponding to $\vartheta$ 
of the right-hand side
vanish on the boundary:
\begin{eqnarray}
\int_{\partial \calX}
\mu_{\partial \calX}
\
(i_{\partial} \times {\rm id})^* \ev^*
\vartheta =
\int_{\partial \calX}
\mu_{\partial \calX}
\ \left(- \bbxi_{i} \bbd \bbx^{i}
+ \bbq^{i} \bbd \bbp_{i}
\right)|_{\partial \calX} = 0.
\label{kinticboundary}
\end{eqnarray}
Therefore, the last two terms corresponding to the $\Theta$ terms
in Equation (\ref{boundarymasterequation})
must vanish:
\begin{eqnarray}
\int_{\partial \calX}
\mu_{\partial \calX}
 \
(i_{\partial} \times {\rm id})^* \ev^* \Theta =
\int_{\partial \calX}
\mu_{\partial \calX}
\ \left. \left(
\bbxi_{i} \bq^{i}
+ \frac{1}{3!}
H_{ijk}(\bbx) \bq^{i} \bq^{j} \bq^{k}\right) \right|_{\partial \calX}=0.
\label{interactionboundary}
\end{eqnarray}
The Q-structure 
follows immediately from the boundary conditions
$\bbxi_{//i} = 0$ and $\bbq_{//}^{i} =0$.

This consistency of the boundary conditions
is described in terms of a target QP manifold $T^*[2]\calL$. 
Equation (\ref{kinticboundary}) is satisfied if
$\vartheta|_{\calL}=0$.
Under this condition, 
Equation (\ref{interactionboundary}) is
satisfied if $\ev^* \Theta|_{\partial \calX}=0$.
This condition is the pullback of the equation
$\Theta|_{\calL} = 0$.
This corresponds to Proposition \ref{boundarytheta2}.

Next we introduce $\alpha$.
The Q-structure is modified by introducing a canonical function $\alpha$.
For example, we take
$\int_{\partial \calX}
\mu_{\partial \calX}
\
(i_{\partial} \times {\rm id})^* \ev^* \alpha =
\int_{\partial \calX}
\mu_{\partial \calX}
\
\frac{1}{2}
\pi^{ij}(\bbx)
\bp_{i} \bp_{j}$. 
The Q-structure changes to
\begin{eqnarray}
S^{\prime}
&=& \int_{\calX}
\mu
\ \left(- \bbxi_{i} \bbd \bbx^{i}
+ \bbq^{i} \bbd \bbp_{i}
+
\bbxi_{i} \bq^{i}
+ \frac{1}{3!}
H_{ijk}(\bbx) \bq^{i} \bq^{j} \bq^{k}
\right)
\nonumber \\
&& -
\int_{\partial \calX}
\mu_{\partial \calX}
\
\frac{1}{2}
\pi^{ij}(\bbx)
\bp_{i} \bp_{j}.
\label{3DCSMboundary}
\end{eqnarray}
The boundary term deforms the boundary conditions.
The variation $\delta S$
is changed to
\begin{eqnarray*}
\delta S^{\prime}|_{\partial \calX}
=
\int_{\partial \calX}
\mu_{\partial \calX}
\ \left[\left(- \bbxi_{i}
+ \frac{1}{2} \frac{\partial \pi^{ij}(\bbx)}{\partial \bbx^i}
\bp_{j} \bp_{k} \right)
\delta \bbx^{i}
+ \left(- \bbq^{i}
-
\pi^{ij}(\bbx) \bp_{j}
\right)
\delta \bbp_{i}
+ \cdots
\right].
\end{eqnarray*}
Since these terms must vanish, consistent boundary conditions are as follows:
\begin{eqnarray}
\bbxi_{i}|_{//} =
- \frac{1}{2} \frac{\partial \pi^{jk}}{\partial \bbx^i}(\bbx)
\bp_{j} \bp_{k}|_{//},
\qquad
\bq^i|_{//} &=& \pi^{ij}(\bbx) \bp_{j} |_{//}.
\label{WZPoissonboundary}
\end{eqnarray}
$\sbv{S^{\prime}}{S^{\prime}}=0$ requires
another consistency condition, 
i.e., the integrand of $S_1$ is zero on the boundary:
\begin{eqnarray}
\left.
\left(\bbxi_{i} \bq^{i}
+ \frac{1}{3!}
H_{ijk}(\bbx) \bq^{i} \bq^{j} \bq^{k}
\right) \right|_{//}
= 0.
\label{boundaryS1}
\end{eqnarray}
Similarly, (\ref{WZPoissonboundary}) and (\ref{boundaryS1}) can be
expressed by the condition on $T^*[2]\calL=T^*[2]T[1]M$.
The condition is that
\begin{eqnarray}
\xi_{i} q^{i}
+ \frac{1}{3!}
H_{ijk}(x) q^{i} q^{j} q^{k}
= 0
\label{WZPS1}
\end{eqnarray}
on the Lagrangian submanifold $\calL^{\prime}
$ defined by
\begin{eqnarray}
\xi_{i} = - \frac{1}{2} \frac{\partial \pi^{jk}}{\partial x^i}(x)
p_{j} p_{k},
\qquad
q^i = \pi^{ij}(x) p_{j}.
\label{WZPLagrangian}
\end{eqnarray}
Here we consider the symplectic form graded by $e^{-\delta_\alpha}$
while preserving $Q$. $\calL^{\prime}$ is a Lagrangian submanifold 
with respect to $e^{-\delta_\alpha} \omega$.
Substituting (\ref{WZPLagrangian}) into
(\ref{WZPS1}), we obtain the geometric structure on
$\calL^{\prime}$:
\begin{eqnarray}
&& \xi_{i} q^{i}
+ \frac{1}{3!}
H_{ijk}(x) q^{i} q^{j} q^{k}
\nonumber \\
&& =
- \frac{1}{2} \frac{\partial \pi^{jk}}{\partial x^l}(x)
\pi^{li}(x) p_{j} p_{k} p_{i}
+ \frac{1}{3!}
H_{ijk}(x) \pi^{il}(x)
\pi^{jm}(x) \pi^{kn}(x) p_{l} p_{m} p_{n} =0.
\label{twistedPoisson}
\end{eqnarray}
This condition corresponds to Proposition \ref{boundaryQstr}.
Equation (\ref{twistedPoisson}) is nothing but the equation of
the twisted Poisson structure $[\pi,\pi]_S = \wedge^3 \pi^{\#} H$.

Physically, the boundary action on $\partial T[1]X$
is obtained by integrating out the superfield $\bbxi_i$
from Equation (\ref{3DCSMboundary}).
By integrating out $\bbxi_i$, we obtain the equations of motion 
$\bbq^i = \bbd \bbx^i$.
By restricting this orbit,
we obtain the twisted-Poisson sigma model
on $\calL^{\prime}$:
\begin{eqnarray}
S_{\calL^{\prime}}
&=&
\int_{\partial \calX}
\mu_{\partial \calX} \
\left(
\bp_{i} \bbd \bbx^{i}
-
\frac{1}{2}
\pi^{ij}(\bbx)
\bp_{i} \bp_{j}
\right)
+
\int_{\calX}
\mu_{\calX} \
\frac{1}{3!}
H_{ijk}(\bbx)
\bbd \bbx^{i}
\bbd \bbx^{j}
\bbd \bbx^{k}
.
\end{eqnarray}
This coincides with the physical model constructed from 
the twisted Poisson structure on $\partial X$
\cite{Klimcik:2001vg}, and
it is the twisted AKSZ sigma model constructed from 
the twisted Poisson structure on $\calL^{\prime}$.
The P-structure on $\Map(T[1]X, \calL^{\prime})$ 
is equivalent to the one constructed from the derived bracket
$\sbv{-}{-}_{\Theta}=\sbv{\sbv{-}{\Theta}}{-}|_{\Map(\partial \calX, \calL^{\prime})}$.
%
The $H$ term breaks the classical master equation,
i.e.,
$\sbv{S_{\calL^{\prime}}}{S_{\calL^{\prime}}}_{\Theta} \neq 0$.

\end{example}

\begin{example}[$n =3$: The Twisted Strong Courant Sigma Models]
\noindent
Let us consider the AKSZ sigma model induced from a
QP manifold of degree $3$ and its canonical function.
We take a QP manifold of degree $3$,
$T^*[3] \calL = T^*[3] (T^*[2] E[1] \oplus A[2])$,
as a generalization of Section \ref{sectionSCA}, 
where $E$ and $A$ are two vector bundles on $M$.

We take the local coordinates on $T^*[2] E[1] \oplus A[2]$
as $(x^i, u^a, z_p, p_i)$
of degree $(0,1,2,2)$, where
$x^i$ is a local coordinate on $M$,
$u^a$ is on the fiber of $E$,
$z_p$ is on the fiber of $A$, and
$p_i$ is on the fiber of $T^*[2]M$.
The conjugate local coordinates of the fiber are
$(\xi_i, v_a, w^p, q^i)$ of degree $(3, 2, 1, 1)$.
A graded symplectic structure is given by
$\omega = \delta x^i \wedge \delta \xi_i
- \delta u^a \wedge \delta v_a
+ \delta p_i \wedge \delta q^i
+ \delta z_p \wedge \delta w^p
$.
We consider the following Q-structure
satisfying $\Theta |_{\calL}=0$:
\bea
\Theta &=&
\xi_i q^i
+ \frac{1}{2}k^{ab} v_a v_b
+ \rho^i{}_r(x) \xi_{i} w^{r}
+ \frac{1}{2}
C^r{}_{pq}(x) z_r w^p w^q
+\frac{1}{4!}
H_{ijkl}(x) q^{i} q^{j} q^{k} q^{l},
\eea
where
$k^{ab}$ is a fiber metric on $E$ and
$H = \frac{1}{4!} H_{ijkl}(x) dx^{i} \wedge dx^{j} \wedge dx^{k}
\wedge dx^{l}$
is a $4$-form on $M$.
If $A$ is a Lie algebroid and $H$ is closed,
$\sbv{\Theta}{\Theta}=0$ is satisfied.

Take the Lagrangian submanifold
$\calL
= T^*[2] E[1] \oplus A[2]
= \{ \xi_i = v_a  = q^i = w^p = 0 \}
$
and a function of degree $3$,
\bea
\alpha =
\sigma^{i}{}_a(x) p_{i} u^{a}
+ \tau^r{}_{a}(x) z_r u^a
+\frac{1}{3!}h_{abc}(x) u^a u^b u^c.
\eea
The canonical function equation
$e^{\delta_{\alpha}}\Theta|_{\calL}=0$ determines the identities
among $\sigma^{i}{}_a(x)$,
$\tau^r{}_{a}(x)$, and $h_{abc}(x)$
and the geometric conditions on $\calL$.

If $w^p=z_p=0$ and $H=0$, this reduces to the Courant algebroid.
If $w^p=z_p=0$ and $H \neq 0$, this reduces to the $H_4$-twisted
Courant algebroid
in Example \ref{twistedCourantexample}.
If $q^i=p_i=0$, this reduces to the strong Courant algebroid.

Let us take a four-dimensional manifold $X$ with a boundary 
that is a three-dimensional manifold
$\partial X$.
The bulk AKSZ sigma model on
$T^*[3] (T^*[2] E[1] \oplus A[1])$
is constructed in the usual way.
The P-structure is
\begin{eqnarray}
\bomega &=& \int_{\calX}
\mu
\
(
\delta \bbx^{i} \wedge \delta \bbxi_{i}
+ \delta \bp_i \wedge \delta \bq^{i}
- \delta \bu^a \wedge \delta \bv_a
+ \delta \bz_p \wedge \delta \bw^p
).
\end{eqnarray}
The Q-structure function with boundary terms has the following form:
\begin{eqnarray}
S&=& \int_{\calX}
\mu
\ \left(
\bbxi_{i} \bbd \bbx^{i}
+ \bbq^{i} \bbd \bbp_{i}
- \bv_a \bbd \bu^a
+ \bw^p \bbd \bz_p
+ \bbxi_{i} \bq^{i}
\right.
\nonumber \\
&&
\left.
+ \frac{1}{2} k^{ab} \bv_a \bv_b
+ \rho^i{}_r(\bbx) \bbxi_{i} \bw^{r}
+ \frac{1}{2} C^r{}_{pq}(\bbx) \bz_r \bw^p \bw^q
+ \frac{1}{4!}
H_{ijkl}(\bbx) \bq^{i} \bq^{j} \bq^{k} \bq^l
\right)
\nonumber \\
&&
- \int_{\partial \calX}
\mu_{\partial X}
\left(
\sigma^{i}{}_a(\bbx) \bp_{i} \bu^{a}
+ \tau^r{}_{a}(\bbx) \bz_r \bu^a
+ \frac{1}{3!} h_{abc}(\bbx) \bu^a \bu^b \bu^c
\right).
\label{4DTSCourantSigmaModel}
\end{eqnarray}
We can construct the boundary of the twisted AKSZ sigma model
by using the method in this section.
In this example, the derived Poisson bracket on $\calL$
is generally degenerate.
In degenerate cases, it is difficult to express
concretely the boundary Q-structure function $S$.
However, the bulk and boundary theories are physically consistent
because the bulk classical master equation is satisfied.

If $A=0$, the QP manifold is $T^*[3] \calL = T^*[3] T^*[2] E[1]$, and
the derived Poisson bracket is nondegenerate.
The corresponding twisted QP manifold $T^*[2] E[1]$
produces the 
boundary topological sigma model with a Wess-Zumino term
from a three-dimensional manifold $\partial X$
to the target space $T^*[2] E[1]$:
\begin{eqnarray}
S_{\calL}&=& \int_{\partial \calX}
\mu_{\partial \calX}
\ \left(\bp_{i} \bbd \bbx^{i}
- \frac{1}{2} k_{ab} \bu^{a} \bbd \bu^{b}
- \sigma^i{}_a(\bbx)
\bp_{i} \bu^{a}
- \frac{1}{3!}
h_{abc}(\bx) \bu^{a} \bu^{b} \bu^{c}
\right)
\nonumber \\
&& +
\int_{\calX}
\mu
\
\frac{1}{4!}
H_{ijkl}(\bbx)
\bbd \bbx^{i}
\bbd \bbx^{j}
\bbd \bbx^{k}
\bbd \bbx^{l}.
\end{eqnarray}
This twisted Q-structure function can also be obtained by
integrating out $\bbxi_i$ and $\bv_a$
in Equation (\ref{4DTSCourantSigmaModel}) by physical arguments.
This is equivalent to 
the $H_4$-twisted Courant sigma model \cite{Hansen:2009zd}.

\end{example}

\section{Summary and Future Areas of Work}
\noindent 
In this paper, we have analyzed the mathematical structures of the 
boundary conditions of AKSZ sigma models and boundary theories,
and we have discussed the connections between canonical functions and QP pairs. 
We have proposed a new algebroid, the strong Courant algebroid.

However, we have not completely analyzed the properties of the solutions
of the canonical function equations, the deformation theory,
and the equivalence classes of the QP description of a twisted QP manifold.
The deformation theory of QP structures will unify various
classical and new structures,
for examples, the Nijenhuis structures on Lie algebras, the
Courant-like algebroid structures in Poisson geometry, 
and others
\cite{Kosmann-Schwarzbach:1990}.
In order to understand the complete structures,
we should consider the general classification of
a QP pair $(T^*[n+1]\calL, \omega, \Theta, \calL, \alpha)$
and $(\calL,\omega_{\calL},\alpha_{\calL})$, 
which can be viewed as a simultaneous deformation 
of the homological function $\Theta$ and the canonical function $\alpha$.

In this paper, all of the examples derived from
canonical functions are geometric.
If we consider a QP manifold and a canonical function over a point,
we can derive other kinds of algebraic structures.
We will leave these as areas to investigate in future work.

The quantization of AKSZ sigma models with boundaries is 
the next step.
Since AKSZ sigma models are topological field theories,
a bulk AKSZ sigma model
and the corresponding boundary theory should 
have the same physical partition functions
and produce equivalent mathematical and physical information.
This will not only describe the quantum membrane theories in physics
but will also quantize a wide class of geometric structures.

\subsection*{Acknowledgments}
\noindent
The authors would like to thank J. Stasheff for valuable discussions
and comments.
N.~Ikeda would like to thank the organizers of The 33rd Winter School
Geometry and Physics, Srni, Czech Republic,
where part of this work was performed.
X.-M.~Xu is grateful to Zhangju Liu for his patient
guidance throughout his study at Peking
University. He also would like to thank Jianghua Lu for hospitality during his visit to the University of Hong Kong, where
part of this work was performed.
X.-M.~Xu is supported by grant number PDFMP2-141756 of 
the Swiss National Science Foundation.


\newcommand{\bibit}{\sl}



\end{document}